\newtheorem{theorem}{Theorem}[section]
\newtheorem{proposition}[theorem]{Proposition}
\newtheorem{lemma}[theorem]{Lemma}
\newtheorem{corollary}[theorem]{Corollary}
\newtheorem{proof}{\textmd{\textit{Proof.}}}
\newtheorem{remark}[theorem]{Remark}
\newcommand{\qedd}{\hfill \Box}
\newcommand{\ve}{\varepsilon}
\newcommand{\lra}{\longrightarrow}
\newcommand{\wt}{\widetilde}
\newcommand{\R}{\ensuremath{\mathbb{R}}}
\def\supp{\mathop{\mathrm{supp}}\nolimits}
\title{The topology of an open manifold with radial curvature bounded from below 
by a model surface with finite total curvature and examples of model surfaces\footnote{
Mathematics Subject Classification (2010)\,:\,53C21, 53C22.}
\footnote{
Keywords: geodesic, radial curvature, total curvature}
}
\author{Minoru TANAKA $\cdot$ Kei KONDO}
\date{}
\begin{document}


\maketitle

\begin{abstract}
We will construct peculiar surfaces of revolution with finite total curvature whose Gauss 
 curvatures are not  bounded. 
Such a surface of revolution is employed as a reference surface of comparison theorems in radial curvature geometry. 
Moreover, we will prove that a complete non-compact Riemannian manifold $M$ is homeomorphic to 
the interior of a compact manifold with boundary, if the manifold   $M$   is not less curved than a non-compact model surface $\wt{M}$ of revolution, and if  the total  curvature of the model surface 
$\wt{M}$  is finite and less than $2\pi$.\par 
By the first result mentioned above, the second result covers a much wider class of manifolds than that of complete non-compact Riemannian manifolds whose sectional curvatures 
are bounded from below by a constant.
\end{abstract}

\section{Introduction}

In a series of our articles (\cite{KT1}, \cite{KT2}, and \cite{KT3}),
by restricting the total curvature of a non-compact model surface of revolution,
we investigated some topological properties of a complete and non-compact  Riemannian manifold which is not less curved than the model surface. The precise definition
to be ``not less curved than a non-compact model surface of revolution" will be defined later.
Typical non-compact model surfaces are Euclidean plane
 $(\R^2,dt^2+t^2d\theta^2)$ and a hyperbolic plane $(\R^2,dt^2+\sinh^2td\theta^2).$
Here $(t,\theta)$ denotes polar coordinates around the origin of $\R^2.$
A non-compact model surface of revolution $(\wt{M}, \tilde{p})$ will be constructed as follows:
Let  a smooth function $f:(0,\infty)\lra (0,\infty) $  be  given. Then, 
$(\R^2,dt^2+f(t)^2d\theta^2)$ is a non-compact complete surface of revolution $\wt{M}$ 
with smooth Riemannian metric $dt^2+f(t)^2d\theta^2$ around the base point $\tilde{p} \in \wt{M}$, 
if $f$ is extensible to a smooth odd function around $0$ and satisfies $f'(0)=1$ 
(see \cite[Theorem 7.1.1]{SST}). 
It is well-known that the Gauss curvature $G$ of $\wt M$ is given by 
\[
G(q)=-\frac{f''}{f}(t(q)).
\]
The total curvature $c(\wt M)$ of  a non-compact model surface of revolution
$\wt M$ is defined by
\[
c(\wt M):= \int_{\wt M}G_+ d\wt M    + \int_{\wt M}G_- d\wt M,
\]
if $ \int_{\wt M}G_+ d\wt M <\infty $ or  $ \int_{\wt M}G_- d\wt M>-\infty.$
Here $G_+:=\max\{G,0\},$ $G_-:=\min\{G,0\}$ and $d\wt M$ denotes the area element of $\wt M.$ 
The total curvature of a complete 2-dimensional  Riemannian manifold is defined analogously.
This definition was introduced by Cohn-Vossen.\par
In 1935, Cohn-Vossen generalized the Gauss-Bonnet theorem for non-compact 
Riemannian manifolds:

\begin{theorem}{\rm (\cite{CV})}\label{thmCV}
If a connected, complete non-compact, finitely-connected Riemannian $2$-dimensional 
manifold $X$
admits a total curvature $c(X)$, 
then 
\[
c(X)\leq 2\pi\chi(X)
\]
holds. Here $\chi(X)$ denotes the Euler characteristic of $X$.
\end{theorem}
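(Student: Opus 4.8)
The plan is to deduce the inequality from the Gauss--Bonnet theorem for compact surfaces with boundary, applied along a well-chosen exhaustion. Since $X$ is connected, non-compact and finitely connected, it is diffeomorphic to the interior of a compact surface with boundary; equivalently there is a connected compact smooth surface-with-boundary $X_0\subset X$, a deformation retract of $X$, such that $X\setminus\mathrm{int}\,X_0=E_1\sqcup\cdots\sqcup E_k$ with each $E_j$ diffeomorphic to $\Sph^1\times[0,\infty)$ and $\partial X_0=\bigsqcup_j(E_j\cap\partial X_0)$. In particular $\chi(X)=\chi(X_0)$. I would build an exhaustion $X=\bigcup_{i\ge 1}\Omega_i$ by compact smooth surfaces-with-boundary of the form $\Omega_i=X_0\cup\bigcup_{j=1}^k A_j^{(i)}$, where $A_j^{(i)}\subset E_j$ is a compact annulus bounded by $E_j\cap\partial X_0$ and by a smooth simple closed curve $C_j^{(i)}$ isotopic to the core of $E_j$, with $A_j^{(i)}\uparrow E_j$ as $i\to\infty$. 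Then $\chi(\Omega_i)=\chi(X_0)=\chi(X)$ and $\partial\Omega_i=\bigsqcup_j C_j^{(i)}$, so the classical Gauss--Bonnet theorem gives
\[
\int_{\Omega_i}G\,d X+\sum_{j=1}^{k}\int_{C_j^{(i)}}\kappa_g\,ds=2\pi\chi(X),
\]
where $\kappa_g$ denotes the geodesic curvature of $C_j^{(i)}$ with respect to the normal pointing into $\Omega_i$.

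Granting that the curves $C_j^{(i)}$ can be chosen so that $\liminf_{i\to\infty}\int_{C_j^{(i)}}\kappa_g\,ds\ge 0$ for each $j$, the theorem follows quickly. If $\int_X G_-\,dX=-\infty$ then $c(X)=-\infty$ and the inequality is vacuous, so assume $\int_X G_-\,dX>-\infty$. Then $\int_{\Omega_i}G_-\,dX\to\int_X G_-\,dX$ and $\int_{\Omega_i}G_+\,dX\to\int_X G_+\,dX$ by monotone convergence, hence $\int_{\Omega_i}G\,dX\to c(X)\in(-\infty,+\infty]$; passing to the limit in the displayed identity and using $\liminf_i\sum_j\int_{C_j^{(i)}}\kappa_g\,ds\ge 0$ gives $c(X)\le 2\pi\chi(X)$, and in the process shows $c(X)$ is in fact finite.

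The heart of the proof, and the step I expect to cause the most trouble, is the construction of the curves $C_j^{(i)}$ with the required curvature control. The natural choice is to use the distance spheres from the core: on $E_j$ put $\rho(x):=\dist(x,X_0)$ and, for a dense set of regular values $t$, take $C_j^{(i)}=\{\rho=t_i\}\cap E_j$ for a suitable sequence $t_i\to\infty$. The integrated geodesic curvature $\ell_j(t):=\int_{\{\rho=t\}\cap E_j}\kappa_g\,ds$ (inner normal) is, by the first variation of arclength, essentially the derivative of the length function $L_j(t):=\mathcal{H}^1(\{\rho=t\}\cap E_j)$, and by Gauss--Bonnet applied to $\{0\le\rho\le t\}\cap E_j$ its variation in $t$ is governed by $-\int G$ over the region swept, up to nonpositive corrections coming from the cut locus of $\partial X_0$. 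A short argument using only $\int_X G_-\,dX>-\infty$ together with the positivity $L_j(t)>0$ then forces $\int_{E_j}G_+\,dX<\infty$ as well (otherwise $L_j'$ would tend to $-\infty$ and $L_j$ would become negative, which also rules out $c(X)=+\infty$), so that $\int_X|G|\,dX<\infty$, $L_j'$ has a limit at infinity, and $\lim_{t\to\infty}\ell_j(t)$ exists and is $\ge 0$ — again because otherwise $L_j(t)$ would eventually be negative. Choosing $t_i\to\infty$ among regular values then yields $\liminf_i\int_{C_j^{(i)}}\kappa_g\,ds\ge 0$ for every $j$.

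The genuine obstacle here is the lack of smoothness of the distance spheres: $\{\rho=t\}$ meets the cut locus of $\partial X_0$, $L_j$ is only Lipschitz, and the Gauss--Bonnet and first-variation identities hold only in a corrected, one-sided or distributional form, with the cut-locus terms contributing with the favorable sign. Making this precise is exactly the content of the structure theory of ends of open surfaces of finite total curvature; the required estimates are available in \cite{SST}, to which I would refer for the details, the remainder being the elementary limiting procedure described above. Finite connectivity enters precisely in guaranteeing that the number of ends — hence of boundary circles of each $\Omega_i$ — is finite, so that the error contributions from all ends combine into a single term with nonnegative $\liminf$.
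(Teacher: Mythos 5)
The paper does not prove this statement at all: it is quoted as Cohn-Vossen's classical theorem, with \cite{CV} (and the machinery in \cite{SST}) as the references, so there is no ``paper proof'' to match against. Your outline is essentially the classical argument: exhaust $X$ by compact cores $\Omega_i$ with $\chi(\Omega_i)=\chi(X)$, apply Gauss--Bonnet, and reduce everything to showing that on each end the total geodesic curvature of large distance circles has nonnegative $\liminf$, which you correctly derive from the positivity of the length function $L_j$ (this same mechanism also rules out $c(X)=+\infty$ when $\int_X G_-\,dX>-\infty$, as you note). You also correctly identify where the content lies: the limiting Gauss--Bonnet identity alone collapses back to Gauss--Bonnet on $X_0$, so the whole theorem is carried by the claim $\liminf_i\int_{C_j^{(i)}}\kappa_g\,ds\ge 0$, i.e.\ by the statement that the total curvature of a tube does not exceed the total geodesic curvature of its boundary. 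Two caveats. First, ``regular values'' of $\rho=\dist(\cdot,X_0)$ do not give smooth curves; what is actually needed is the Fiala--Hartman-type structure theorem (for a.e.\ $t$ the circle $\{\rho=t\}$ is a finite union of piecewise-smooth arcs, Gauss--Bonnet holds with corner/cut-locus corrections of the favorable sign, and $L_j$ is differentiable a.e.\ with $L_j'$ bounded above by the corrected total geodesic curvature); you flag this and defer it to \cite{SST}, where it is indeed available, but be aware that this deferral is the technical heart of the proof rather than a side issue --- \cite{SST} proves Cohn-Vossen's theorem itself from exactly this machinery. Second, in the limiting step you should take $\liminf$ (or pass to a subsequence) explicitly rather than assert that $\lim_{t\to\infty}\ell_j(t)$ exists before you know $\int_{E_j}|G|\,dX<\infty$; the clean order is: positivity of $L_j$ forces $\int_{E_j}G\,dX\le\kappa(\partial E_j)$, and summing over the finitely many ends (here finite connectivity is used, as you say) and adding Gauss--Bonnet on $X_0$ gives $c(X)\le 2\pi\chi(X)$. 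With these points tightened, your proposal is a correct rendering of the standard proof.
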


\bigskip

Now, we are in a position to give the precise definition to be ``not less curved 
than a non-compact model surface of revolution": 
Let $(M,p)$ denote a complete, connected and  non-compact $n$-dimensional 
Riemannian manifold with base point $p\in M$ and $(\wt M,\tilde p)$ a non-compact model 
surface of revolution defined above. Let us note that a unit speed geodesic 
$\wt\gamma :[0,\infty)\lra \wt M$  emanating from $\tilde p$, 
which is called a {\it meridian,} is a ray. 
From now on, we choose a meridian $\wt \gamma$ and fix it. 
We say that the manifold $(M,p)$ has radial curvature at the base point $p$ 
bounded from below by that of the model surface $(\wt M, \tilde p)$, 
if along every minimal geodesic $\gamma :[0,a)\lra M$ emanating from $p=\gamma(0)$, 
its sectional curvature $K_M$ satisfies 
\[
K_M(\sigma_t)\geq G(\wt\gamma(t))
\]
for all $t\in[0,a)$ and $2$-dimensional linear planes $\sigma_t$ containing $\gamma'(t)$. 
This is the precise definition that a complete non-compact Riemannian manifold is not less curved than a model surface.

\bigskip

By  Theorem \ref{thmCV}, the total curvature of a non-compact model surface  of revolution does not exceed $2\pi,$ if the total curvature exists. Hence it is natural  to assume that the total curvature of a non-compact model surface of revolution is finite. 
Under this assumption we have proved  the following theorem. 
 
 \begin{theorem}{\rm (\cite[Theorem 2.2]{KT2})}\label{thm4.3}
Let $(M,p)$ be a complete non-compact Riemannian manifold $M$ 
whose radial sectional curvature at the base point $p$ is bounded from below by
that of a non-compact model surface of revolution $(\wt{M}, \tilde{p})$ 
with its metric $dt^2 +  f(t)^2d \theta^2$. If 
\begin{enumerate}[{\rm ({A--}1)}]
\item
$\wt{M}$ admits a finite total curvature, and 
\item
$\wt{M}$ has no pair of cut points in a sector $\wt{V} (\delta_{0})$ for some $\delta_{0} \in (0, \pi]$, 
\end{enumerate}
then $M$ is homeomorphic to the interior of a compact manifold with boundary. 
Here $\wt{V}(\delta_{0}) := \{ \tilde{x} \in \wt{M} \, | \, 0 < \theta(\tilde{x}) < \delta_{0} \}$.
\end{theorem}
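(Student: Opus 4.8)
The plan is to run critical point theory for the distance function $\rho:=\dist(p,\cdot)$ on $M$, using the new type of Toponogov comparison theorem (comparing geodesic triangles in $M$ with a vertex at $p$ to geodesic triangles in $\wt M$ with a vertex at $\tilde p$; cf.\ \cite{KT1}) as the geometric input, and then to invoke Gromov's isotopy lemma. Recall that $q\neq p$ is a \emph{critical point} of $\rho$ when the set $\Lambda_q\subset U_qM$ of initial vectors of minimal geodesics from $q$ to $p$ lies in no open half-space of $T_qM$, equivalently $0\in\mathrm{conv}\,\Lambda_q$; such a $q$ is in particular a cut point of $p$. The goal is to produce $r_0>0$ so that every $q$ with $\rho(q)\ge r_0$ is a regular point of $\rho$. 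Granting this, since all critical points of $\rho$ then lie in $\ol{B_{r_0}(p)}$, for any $R>r_0$ the isotopy lemma (Gromov; see also \cite{SST}) furnishes a homeomorphism of $M\setminus B_R(p)$ onto $\partial B_R(p)\times[R,\infty)$ with $\partial B_R(p)$ a closed topological manifold, whence $M$ is homeomorphic to the interior of the compact manifold-with-boundary $\ol{B_R(p)}$.

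Everything therefore reduces to \emph{confining the critical set}. Assume $q$ is a critical point with $\rho(q)=r$, and choose minimal geodesics $\sigma_1,\dots,\sigma_k$ (with $k\le n+1$) from $q$ to $p$ such that $0\in\mathrm{conv}\{\sigma_i'(0)\}$; reversing them gives unit vectors $v_i\in T_pM$ with $\exp_p(rv_i)=q$. The idea is to develop the geodesic bigons $\sigma_i\cup\sigma_j$ into $\wt M$: by the comparison theorem, each $\sigma_i$ corresponds to a geodesic $\wt\sigma_i\colon[0,r]\to\wt M$ issuing from $\tilde p$, laid out so that the angles between consecutive developed geodesics at $\tilde p$ are governed by those between the $v_i$ at $p$, with the endpoints $\wt\sigma_i(r)$ playing the role of ``copies of $q$''. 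Hypothesis (A--2) is exactly what makes this comparison usable: it guarantees that the auxiliary triangles one forms can be taken inside a sector $\wt V(\delta_0)$ of $\wt M$ containing no pair of cut points, where the new type of Toponogov comparison theorem is valid. Hypothesis (A--1) then enters quantitatively: finiteness of $c(\wt M)$ forces $\int_{\{t\ge\ell\}}|G|\,d\wt M\to 0$ as $\ell\to\infty$, so for $r$ large the developed configuration is nearly Euclidean away from $\tilde p$, while the bounded curvature near $\tilde p$ is absorbed using the special structure of surfaces of revolution (the Clairaut relation, the fact that all geodesics from $\tilde p$ are meridians so $\Cut(\tilde p)=\emptyset$, and convergence of $f'(t)$). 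The upshot should be that, once $r\ge r_0$, the copies $\wt\sigma_i(r)$ of $q$ are so close to one another relative to $r$ that the angles between the $v_i$ at $p$, hence between the $\sigma_i'(0)$ at $q$, are all strictly less than $\pi/2$; then $0\notin\mathrm{conv}\{\sigma_i'(0)\}$, contradicting criticality.

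I expect the real difficulty to lie in this second step, for three reasons. First, the comparison is one-sided — only sectional curvatures of radial planes along geodesics from $p$ are controlled — and is confined to the sector $\wt V(\delta_0)$, so classical two-sided Toponogov arguments, and in particular any attempt to read a small angle directly off a naive triangle comparison in $M$, are unavailable; one must work with developments of geodesic bigons and arrange the auxiliary triangles so that the comparison bounds the relevant angle from \emph{above}. Second, the curvature of $\wt M$ concentrated near $\tilde p$ does not disappear as $r\to\infty$ and must be handled by the structure of surfaces of revolution rather than by a crude tail estimate alone. Third, and most importantly, the bound must be made uniform in $q$ and in the (possibly infinite) family of minimal geodesics joining $q$ to $p$, so as to yield one radius $r_0$ beyond which \emph{every} point is regular; it is this uniformity, rather than a merely pointwise statement for fixed $q$, that gives the compact confinement of the critical set. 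Once that is established, the deduction of the topological conclusion from the isotopy lemma is routine.
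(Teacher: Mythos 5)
You should first note that the paper you are writing into does not prove this statement at all: Theorem~\ref{thm4.3} is quoted verbatim from \cite[Theorem 2.2]{KT2}, and the present article's contribution (Theorem~\ref{thm1.3}) is precisely to remove hypothesis (A--2) when $c(\wt M)<2\pi$. So the only possible comparison is with the proof in \cite{KT2}, whose broad shape your proposal does reproduce: critical point theory for $\rho=\dist(p,\cdot)$, the generalized Toponogov comparison theorem of \cite{KT1}, \cite{KT2} as the geometric input, and the isotopy lemma to pass from an empty critical set outside a ball to the topological conclusion. That outline is correct, and the endgame (isotopy lemma $\Rightarrow$ homeomorphism with the interior of $\ol{B_R(p)}$) is indeed routine.

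The genuine gap is that the entire substance of the theorem --- producing one radius $r_0$ beyond which every point is regular --- is asserted (``the upshot should be\ldots'') but not proved, and the mechanism you sketch for it points in a direction the available comparison cannot support. The Toponogov-type theorem valid here is one-sided in exactly the wrong sense for your plan: for an admissible open triangle with a vertex at $p$ it bounds the angles at the \emph{other} vertices from \emph{below} by the model angles, so it cannot yield the \emph{upper} bounds on the pairwise angles of the directions $\sigma_i'(0)\in\Lambda_q$ at $q$ that your ``all angles $<\pi/2$, hence $0\notin\mathrm{conv}\,\Lambda_q$'' criterion needs; moreover the geodesic bigons $\sigma_i\cup\sigma_j$ you propose to develop are not configurations to which that theorem applies (it requires a triangle whose side opposite $p$ is a minimal geodesic, and (A--2) is what allows the comparison triangle to be placed and controlled inside $\wt V(\delta_0)$), and closeness of the developed endpoints $\wt\sigma_i(r)$ in $\wt M$ gives no angle estimate at $q$ in $M$ without precisely the two-sided control you concede is missing. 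Your quantitative use of (A--1) is also too optimistic: (A--1) allows $c(\wt M)=2\pi$, in which case $\lim_{t\to\infty}f'(t)=0$ and the end of $\wt M$ is not ``nearly Euclidean'' in any useful sense, so a tail-of-total-curvature estimate alone cannot close the argument --- this is exactly where the sector condition (A--2) and the model-surface lemmas of \cite{KT2} do real work. The workable (and standard) route is the opposite one: for $q$ far from $p$, compare triangles $(p,q,x)$ with $x$ far out along a ray from $p$, and use the \emph{lower} bound on the angle at $q$ together with the geometry of $\wt V(\delta_0)$ to show every minimal geodesic from $q$ to $p$ makes angle greater than $\pi/2$ with the direction toward $x$, so that $q$ is regular. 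As it stands, your text is a strategy description with the decisive estimates left open, not a proof.
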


In this article, 
we will show that the assumption (A--2) of Theorem \ref{thm4.3} is unnecessary 
if the total curvature is less than $2\pi.$ That is, we will prove the following theorem:

\begin{theorem}\label{thm1.3}
A connected, complete, non-compact Riemannain manifold $(M, p)$ is homeomorphic to the interior of a compact manifold with boundary if the radial curvature at a point $p\in M$ is bounded from below 
by that of a non-compact model surface of revolution $(\wt{M}, \tilde{p})$ 
which admits a finite total curvature $c(\wt M)$ less than $2\pi$.
\end{theorem}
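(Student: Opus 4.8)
The plan is to reduce the statement to critical-point theory for the distance function $d_p := \dist(p,\cdot)$. By the isotopy lemma of Grove--Shiohama it is enough to produce a compact set $K \subset M$ such that $d_p$ has no critical point in $M \setminus K$; then for large $R$ the sublevel set $\{d_p \le R\}$ is a compact manifold with boundary whose interior is homeomorphic to $M$. Thus the task reduces to showing that, for $q$ far from $p$, the initial directions at $q$ of all minimal geodesics from $q$ to $p$ lie in an open half-space of $T_qM$.

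I would first record the structural fact that makes $2\pi$ the natural threshold: finiteness of $c(\wt{M})$ forces $f'(\infty) := \lim_{t\to\infty} f'(t)$ to exist, and integrating $G = -f''/f$ against the area element $f\,dt\,d\theta$ gives $c(\wt{M}) = 2\pi\bigl(1 - f'(\infty)\bigr)$; hence $c(\wt{M}) < 2\pi$ is equivalent to $f'(\infty) > 0$. This produces a linear lower bound $f(t) \ge \tfrac12 f'(\infty)\,t$ for all large $t$ and shows that $\wt{M}$ is asymptotic to a flat metric cone of angle $2\pi - c(\wt{M}) > 0$. On such a cone the distance function from the apex has no critical point off the apex, and on $\wt{M}$ every point other than $\tilde{p}$ is a regular point of $d_{\tilde p}$ (the meridian through it is the unique minimal connection to $\tilde{p}$); this is the model behavior to be transplanted.

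The heart of the proof is the comparison step, and here one cannot merely invoke Theorem \ref{thm4.3}: its hypothesis (A--2) is not implied by $c(\wt{M}) < 2\pi$ --- a profile $f$ with a suitably thin neck admits minimal geodesics that spiral around the neck, hence pairs of cut points in every sector $\wt{V}(\delta)$, while $c(\wt{M})$ remains small --- so one must first establish a Toponogov-type comparison theorem valid \emph{without} (A--2). I would: (i) use Clairaut's relation to analyze the geodesics of $\wt{M}$ and prove the required ``model lemmas'' --- for suitably small $\delta_0 > 0$ and suitably large $R_0$, the part of $\wt{M}$ outside the parallel $\{t = R_0\}$ and inside $\wt{V}(\delta_0)$ is a wedge of the asymptotic cone up to controlled error, contains no pair of cut points, and minimal geodesics between its points remain in a slightly larger sector; (ii) prove the comparison theorem: a minimal geodesic triangle in $M$ with one vertex $p$ and the other two at distance $\ge R_0$ from $p$ has a comparison configuration inside this exterior wedge of $\wt{V}(\delta_0)$ with the angle at each far vertex in $M$ no smaller than the corresponding angle in $\wt{M}$ --- the inputs being the radial-curvature inequality, the first variation formula, and the Gauss--Bonnet/Cohn--Vossen theorem applied to the enclosed regions (Gauss--Bonnet rather than Jacobi-field comparison, since $G$ need not be bounded and only the \emph{integrated} curvature is available); and (iii) feed this into the standard argument --- fix a ray $\gamma$ from $p$, form the triangles $p,\,q,\,\gamma(s)$, and let $s\to\infty$ --- to conclude that for $d_p(q)$ large the angle at $q$ between any minimal geodesic $qp$ and the asymptotic direction of $\gamma$ is uniformly below $\pi/2$, so $q$ is regular for $d_p$. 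Critical-point theory then finishes the proof.

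The step I expect to be the main obstacle is (i)--(ii): constructing and controlling the comparison configuration when $\wt{M}$ is badly behaved near $\tilde{p}$ (unbounded Gauss curvature) and may have pairs of cut points. The resolution has to be ``global'': one must choose $\delta_0$ and $R_0$ in terms of $c(\wt{M})$ and the gap $2\pi - c(\wt{M})$ so that the total curvature of the exterior regions entering the argument is small enough (by Cohn--Vossen) to force the comparison angles to their conical values, so that no cut pair of $\wt{M}$ interferes with the configuration, and so that the comparison triangle can always be rotated into one such sector. This is presumably the content of the two Model Lemmas and of the new Toponogov comparison theorem announced in the section headings.
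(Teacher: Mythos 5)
Your plan has a genuine gap: the entire burden of the argument is placed on steps (i)--(ii) --- a new Toponogov-type comparison theorem valid without hypothesis (A--2), built from Clairaut's relation, Gauss--Bonnet, and a wedge-of-the-asymptotic-cone construction --- and these steps are only announced, not carried out. You yourself flag them as ``the main obstacle.'' Proving such a comparison theorem when $\wt{M}$ may have pairs of cut points in every sector and unbounded Gauss curvature is a substantial piece of work (it is essentially the content of a separate paper in this series), so as written the proposal does not constitute a proof; it is a program whose hardest component is missing.

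More importantly, you missed the observation that makes all of this unnecessary. You correctly note that $c(\wt{M})<2\pi$ does not force (A--2) for $\wt{M}$ itself, but the theorem does not require applying Theorem \ref{thm4.3} with $\wt{M}$ as the reference surface: one may \emph{change the model}. Let $G_{-}:=\min\{G,0\}$ and let $(M^{*},p^{*})$ be the model surface with profile $m$ solving $m''+G_{-}m=0$, $m(0)=0$, $m'(0)=1$. Since $G\ge G_{-}$, the radial curvature of $(M,p)$ is still bounded below by that of $(M^{*},p^{*})$; since $G_{-}\le 0$, the surface $M^{*}$ has nonpositive curvature, hence no pair of cut points in any sector, so (A--2) holds trivially for $M^{*}$. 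Finiteness of $c(M^{*})$ (condition (A--1)) follows from exactly the structural fact you recorded: $c(\wt{M})<2\pi$ gives $\lim_{t\to\infty}f(t)/t>0$, hence $f(t)/t>1/\alpha$ on $(0,\infty)$, so
\[
\int_{0}^{\infty} t\,G_{-}(t)\,dt \;\ge\; \alpha\int_{0}^{\infty} f(t)\,G_{-}(t)\,dt \;>\;-\infty
\]
because the negative part of the total curvature of $\wt{M}$ is finite; a short ODE lemma (Lemma \ref{lem4.1}) then yields $c(M^{*})$ finite. With (A--1) and (A--2) verified for $M^{*}$, Theorem \ref{thm4.3} applies verbatim and gives the conclusion. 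So the correct resolution of the difficulty you identified is not a new comparison theorem but a two-line replacement of the reference surface; your proposal, even if completed, would be a far longer and harder route to the same statement.
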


\noindent
Note that the finiteness of the total curvature does not impose strong restriction  on the curvature of the model surface. In fact, we will prove  the following theorem which tells us that the radial curvature of the model surface in Theorem \ref{thm1.3} is not always bounded from below.

\begin{theorem}\label{thm1.4}
Let $\wt{M}:=(\R^2,dt^2+f(t)^2d\theta^2)$ denote a non-compact model surface of revolution 
which admits a finite total curvature $c(\wt M)$ less than $2\pi.$ Then, for any $\ve>0,$ there exists a non-compact model surface of revolution $\wt{M}_{\ve}^{-}:=(\R^2,dt^2+m_\ve^-(t)d\theta^2)$ 
such that
\[
K\geq G_\ve^- \ {\it on} \  [0,\infty),
\]
\[
||G_\ve^{-}-K||_2<\ve,
\]
\[
\liminf_{t\to\infty} G_\ve^-(t)=-\infty, 
\]
and
\[
|c(\wt M)-c(\wt M_\ve^{-})|<\ve.
\]
Here the functions 
\[
K(t):=-\frac{f''}{f}(t), \quad G_{\ve}^{-}(t):=-\frac{m_{\ve}^{-}{}^{\prime\prime}}{m_{\ve}^{-}}(t)
\] 
denote the radial curvature of $\wt M, \wt M_\ve^{-}$, respectively, and 
$||G_\ve^{-}-K||_2 := \sqrt{\int_{0}^{\infty} |G_{\ve}^{-} - K|^{2}\,dt}$.
\end{theorem}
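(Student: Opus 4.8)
The plan is to realize $\wt M_{\ve}^{-}$ by prescribing its radial curvature function directly and then solving the corresponding Jacobi equation. Two preliminary remarks set the stage. First, a direct computation in polar coordinates gives $c(\wt M)=2\pi\bigl(1-\lim_{t\to\infty}f'(t)\bigr)$, and finiteness of $c(\wt M)$ is equivalent to $\int_{0}^{\infty}|K(t)|f(t)\,dt<\infty$; combined with the hypothesis $c(\wt M)<2\pi$ this forces $\lim_{t\to\infty}f'(t)>0$, so that $f(t)/t$ tends to a positive constant. In particular $\int_{1}^{\infty}f(t)^{-2}\,dt<\infty$, and the elementary estimate
\[
f(s)^{2}\int_{s}^{\infty}f(t)^{-2}\,dt\le C_{0}\,(1+s)\qquad(s\ge1)
\]
holds with a constant $C_{0}$ depending only on $f$; this is precisely the fact that will tame the perturbation at infinity.

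Second, the construction. Fix a sequence $t_{k}\to\infty$ with $t_{1}\ge2$ and $t_{k+1}-t_{k}\ge1$, set $\mu_{k}:=|K(t_{k})|+k$, and choose smooth functions $\psi_{k}$ with $0\le\psi_{k}\le1$, $\psi_{k}(t_{k})=1$, and $\supp\psi_{k}\subset(t_{k}-\delta_{k},t_{k}+\delta_{k})$, where the widths $\delta_{k}\in(0,\tfrac12)$ are to be fixed at the very end. Define $G_{\ve}^{-}:=K-\sum_{k}\mu_{k}\psi_{k}$. Then $G_{\ve}^{-}$ is smooth (the supports are disjoint), satisfies $G_{\ve}^{-}\le K$ on $[0,\infty)$, coincides with $K$ near $0$, and $G_{\ve}^{-}(t_{k})=K(t_{k})-\mu_{k}\le-k$, so $\liminf_{t\to\infty}G_{\ve}^{-}(t)=-\infty$. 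Let $m_{\ve}^{-}$ be the solution of $(m_{\ve}^{-})''=-G_{\ve}^{-}m_{\ve}^{-}$ with $m_{\ve}^{-}(0)=0$ and $(m_{\ve}^{-})'(0)=1$. Since $G_{\ve}^{-}=K$ near $0$, $m_{\ve}^{-}$ agrees with $f$ near $0$, hence extends to a smooth odd function with derivative $1$ at $0$, so $\wt M_{\ve}^{-}$ is a genuine non-compact model surface of revolution with radial curvature $G_{\ve}^{-}$. The Wronskian $W:=(m_{\ve}^{-})'f-m_{\ve}^{-}f'$ satisfies $W(0)=0$ and $W'=(K-G_{\ve}^{-})m_{\ve}^{-}f\ge0$, so $W\ge0$; hence $(m_{\ve}^{-}/f)'=W/f^{2}\ge0$, the ratio $m_{\ve}^{-}/f$ is non-decreasing with limit $1$ at $0$, and therefore $m_{\ve}^{-}\ge f>0$ on $(0,\infty)$ (so $\wt M_{\ve}^{-}$ is complete) and $K\ge G_{\ve}^{-}$.

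It then remains to make the three quantitative conclusions hold by shrinking the $\delta_{k}$. For the $L^{2}$ bound, disjointness of supports gives $\|G_{\ve}^{-}-K\|_{2}^{2}=\sum_{k}\mu_{k}^{2}\int\psi_{k}^{2}\le2\sum_{k}\mu_{k}^{2}\delta_{k}$. For the total-curvature bound, write $L:=\lim_{t\to\infty}m_{\ve}^{-}(t)/f(t)\in[1,\infty]$; integrating $(m_{\ve}^{-}-f)''=K(f-m_{\ve}^{-})+(\sum_{k}\mu_{k}\psi_{k})m_{\ve}^{-}$ twice and using $c(\wt M)-c(\wt M_{\ve}^{-})=2\pi\lim_{t\to\infty}\bigl((m_{\ve}^{-})'(t)-f'(t)\bigr)$ (legitimate once $L$ is finite, since then $\wt M_{\ve}^{-}$ too has finite total curvature) together with $f\le m_{\ve}^{-}\le Lf$ yields
\[
\bigl|c(\wt M)-c(\wt M_{\ve}^{-})\bigr|\le2\pi\Bigl[(L-1)\!\int_{0}^{\infty}\!|K|f\,dt+2L\,C_{1}\!\sum_{k}\mu_{k}\delta_{k}(1+t_{k})\Bigr],
\]
where $C_{1}:=\sup_{t\ge1}f(t)/t<\infty$. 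Thus everything hinges on showing that $L$ is finite and close to $1$.

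This last point is the crux, and I would handle it by a bootstrap. From $(\log(m_{\ve}^{-}/f))'=W/(fm_{\ve}^{-})$, Fubini's theorem, $m_{\ve}^{-}\ge f$, and the elementary estimate above, one obtains, for any $T$ on which $m_{\ve}^{-}/f\le2$,
\[
\log\frac{m_{\ve}^{-}(T)}{f(T)}\le 2C_{0}\sum_{k}\mu_{k}\!\int\psi_{k}(s)(1+s)\,ds=:A.
\]
Hence if $A<\log2$ the ratio $m_{\ve}^{-}/f$ can never reach $2$ (it is continuous and starts at $1$), so $L\le2$, and then the same inequality with $T\to\infty$ gives $\log L\le A$, i.e.\ $1\le L\le e^{A}$ and $L-1\le2A$. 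Consequently it suffices to choose the $\delta_{k}$ so small that $2\sum_{k}\mu_{k}^{2}\delta_{k}<\ve^{2}$, that $A<\log2$, and that the bracket in the displayed total-curvature estimate is less than $\ve/(2\pi)$; since $A$ and $\sum_{k}\mu_{k}\delta_{k}(1+t_{k})$ can be made as small as one wishes — for instance $\delta_{k}:=\eta\,2^{-k}\bigl(1+\mu_{k}^{2}(1+t_{k})\bigr)^{-1}$ with $\eta>0$ small — all three requirements are met, and $\wt M_{\ve}^{-}$ then satisfies every assertion of the theorem. The step I expect to be the main obstacle is exactly this a priori control of $L$: because $G_{\ve}^{-}$ is made very negative on the bumps, $m_{\ve}^{-}$ could \emph{a priori} overtake $f$ by an uncontrolled factor, and it is only the hypothesis $c(\wt M)<2\pi$ — through $\lim_{t\to\infty}f'(t)>0$, hence the integrability of $f^{-2}$ at infinity — that, via the bootstrap, keeps $L$ within $e^{A}$ of $1$; once that is in place, the rest is routine.
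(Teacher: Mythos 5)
Your construction is correct, but it follows a genuinely different route from the paper. The paper builds $G_\ve^-$ and $m_\ve^-$ iteratively: it first proves a perturbation statement on a single bounded interval (Proposition \ref{prop2.6}, whose engine is the Wronskian estimate of Lemmas \ref{lem2.1}--\ref{lem2.3} giving $\alpha(m)\le C(f,a,b)\,||G-K||_2/(1-C(f,a,b)\,||G-K||_2)$ and the bound $|c$-change$|\le 2\pi\int|m''-f''|\,dt$ from Lemma \ref{lem2.4}), then applies it on the intervals $(2k-\tfrac12,2k+\tfrac12)$ with geometric budgets $\ve/3^{k+1}$ and passes to the limit $m_\ve=\lim_k m_k$, $G_\ve=\lim_k G_k$. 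You instead prescribe the curvature in one shot, $G_\ve^-=K-\sum_k\mu_k\psi_k$ with disjoint narrow bumps, and control the single solution $m_\ve^-$ by Sturm comparison ($m_\ve^-\ge f$, since $G_\ve^-\le K$ makes $m_\ve^-/f$ non-decreasing) together with a log-derivative/Fubini bootstrap bounding $\log(m_\ve^-/f)$ by $A=2C_0\sum_k\mu_k\int\psi_k(s)(1+s)\,ds$; your quantity $A$ plays exactly the role of the paper's $C(f,a,b)\,||G-K||_2$, and both arguments ultimately rest on the same Wronskian identity $W'=(K-G)mf$ and on $\int_1^\infty f^{-2}\,dt<\infty$, which comes from $c(\wt M)<2\pi$. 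What your route buys: no infinite sequence of solutions and no limit step (which the paper dispatches with ``it is easy to check''), explicit one-shot smallness conditions on the bump widths, and the one-sidedness of the perturbation gives you monotonicity of $m_\ve^-/f$ for free, so positivity of $m_\ve^-$ and the sign of $L-1$ are automatic. What the paper's route buys: Proposition \ref{prop2.6} is two-sided in $G-K$, so the identical machinery immediately yields Theorem \ref{thm3.1} (the $G_\ve^+\ge K$ counterpart) and Corollary \ref{cor3.1}, whereas your Sturm-monotonicity shortcut is specific to lowering the curvature and would need the paper-style absolute-value estimate on $\sigma=m/f-1$ to handle the upward perturbation. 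Two small points you should make explicit when writing this up: the assertion $W'\ge0$ presupposes $m_\ve^-\ge0$, so run the argument on the maximal interval of positivity of $m_\ve^-$ (standard, but as stated it is circular); and the identity $c(\wt M)-c(\wt M_\ve^-)=2\pi\lim_{t\to\infty}\bigl((m_\ve^-)'-f'\bigr)$ needs $\wt M_\ve^-$ to admit a total curvature, which requires not only $L<\infty$ but also $\sum_k\mu_k\int\psi_k f\,dt<\infty$ --- your final choice of $\delta_k$ does guarantee this, so the gap is only expository.
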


\begin{remark} In Theorem \ref{thm1.4}, it is impossible to choose $\wt{M}_{\ve}^{-}$ as a 
von Mangoldt surface of revolution, when $K(t)$ is bounded from below. Here a von Mangoldt 
surface of revolution is, by definition, a model surface of revolution whose radial curvature 
is non-increasing on $[0, \infty)$.

\end{remark}

\section{Proof of Theorem \ref{thm1.3}}\label{sec:remarks}
By the same argument in the proof of \cite[Theorem 5.3]{KT2}, we have the next lemma.

\begin{lemma}\label{lem4.1}
Let $(M^{*}, p^{*})$ be 
a non-compact model surface of revolution with its metric 
$dt^2 +  m(t)^2d \theta^2$ satisfying the differential equation
$m''(t) + K (t) m(t) = 0$ with initial conditions $m(0) = 0$ and $m'(0) = 1$. 
If $M^{*}$ satisfies  
\[
\int^{\infty}_{0} t \,K (t) \,dt > - \infty
\]
and $K(t) \le 0$ on $[0, \infty)$, then $M^{*}$ admits a finite total curvature. 
\end{lemma}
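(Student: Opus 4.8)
The plan is to compute the total curvature of $M^*$ explicitly in terms of the asymptotic slope $L:=\lim_{t\to\infty}m'(t)$ of the warping function, and then to show $L<\infty$ by a Gr\"onwall argument that uses the hypothesis $\int_0^\infty tK(t)\,dt>-\infty$. First I would record the elementary consequences of $K\le 0$: since $m(0)=0$ and $m'(0)=1$, the function $m$ is positive on $(0,\infty)$, so $m''=-Km\ge 0$ there; hence $m'$ is non-decreasing with $m'\ge 1$, the limit $L$ exists in $[1,\infty]$, and $m$ is convex with $m(0)=0$, which gives the bound $m(s)\le s\,m'(s)$ for all $s\ge 0$.

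Next I would express the total curvature. The Gauss curvature of $M^*$ equals $G=-m''/m=K\le 0$, so $G_+\equiv 0$, the quantity $c(M^*)$ is defined, and, using the area element $m(t)\,dt\,d\theta$ together with $Gm=-m''$,
\[
c(M^*)=\int_{M^*}G\,dM^*=2\pi\int_0^\infty K(t)m(t)\,dt=-2\pi\int_0^\infty m''(t)\,dt=2\pi\bigl(1-L\bigr).
\]
Thus the lemma reduces to the single claim that $L<\infty$.

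For this, write $L(t):=m'(t)=1+\int_0^t|K(s)|\,m(s)\,ds$ (with $|K|=-K$), and substitute the convexity bound $m(s)\le s\,L(s)$ to obtain
\[
L(t)\le 1+\int_0^t s\,|K(s)|\,L(s)\,ds .
\]
Since $\int_0^\infty s\,|K(s)|\,ds=-\int_0^\infty sK(s)\,ds<\infty$ by hypothesis, Gr\"onwall's inequality yields $L(t)\le\exp\!\left(\int_0^\infty s\,|K(s)|\,ds\right)$ uniformly in $t$, hence $L<\infty$. Plugging this back gives $-\infty<c(M^*)=2\pi(1-L)\le 0$, which is the assertion.

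I expect the Gr\"onwall bootstrap in the third step to be the only part with genuine content; everything else is bookkeeping. The points that deserve care are the passage $m(s)\le s\,m'(s)$ from convexity and the observation that the improper integral defining $c(M^*)$ converges absolutely here, which is automatic once $L<\infty$ because $-G\,dM^*$ pulls back to $2\pi\,m''\,dt\,d\theta$, a measure of finite total mass. This reproduces, in the present notation, the argument used in the proof of \cite[Theorem 5.3]{KT2}.
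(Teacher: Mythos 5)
Your proof is correct. The paper gives no argument for this lemma at all—it simply invokes the proof of \cite[Theorem 5.3]{KT2}—and your reasoning is exactly the standard one behind that citation: from $K\le 0$ one gets $m$ convex with $m(t)\le t\,m'(t)$, the Gr\"onwall (or equivalently $(\log m')'\le -tK$) estimate bounds $m'$ by $\exp\bigl(\int_0^\infty t\,|K(t)|\,dt\bigr)$, and then $c(M^*)=2\pi\bigl(1-\lim_{t\to\infty}m'(t)\bigr)$ is finite.
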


\begin{lemma}{\bf (Model Lemma II)}\label{lem4.2}
Let $(\wt{M}, \tilde{p})$ denote 
a non-compact model surface of revolution with its metric 
$d\tilde{s}^2 = dt^2 + f(t)^2d \theta^2$ satisfying the differential equation $f''(t) + G (t) f(t) = 0$ 
with initial conditions $f(0) = 0$ and $f'(0) = 1$. 
If $\wt{M}$ admits a finite total curvature $c(\wt M)$ less than $2\pi$, 
then there exists a non-compact model surface of revolution $(M^{*}, p^{*})$ with its metric 
\begin{equation}\label{sec4-metric}
g^{*} = dt^2 +  m(t)^2d \theta^2
\end{equation}
satisfying the differential equation $m''(t) + G_{-} (t) m(t) = 0$ 
with initial conditions $m(0) = 0$ and $m'(0) = 1$ such that 
$M^{*}$ admits a finite total curvature. Here $G_{-} := \min \{G, 0\}$. 
\end{lemma}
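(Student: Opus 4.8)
The plan is to take $m$ to be the globally defined $C^2$ solution of the linear initial value problem $m'' + G_- m = 0$, $m(0)=0$, $m'(0)=1$, to verify that $(M^*, p^*) := (\R^2, dt^2 + m(t)^2 d\theta^2)$ is a genuine complete non-compact model surface of revolution, and then to check the two hypotheses of Lemma~\ref{lem4.1} for $M^*$ with radial curvature function $G_-$. Since $G_- \le 0$ by definition, the only hypothesis of Lemma~\ref{lem4.1} that requires real work is the integrability condition $\int_0^\infty t\, G_-(t)\, dt > -\infty$.

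First I would show $m(t) > 0$ for $t > 0$, so that $g^*$ is nondegenerate on $(0,\infty)$. Set $t^* := \sup\{\tau > 0 : m > 0 \text{ on } (0,\tau)\}$; since $m(0)=0$ and $m'(0)=1$ we have $t^* > 0$. On $(0, t^*)$ the equation gives $m'' = -G_- m \ge 0$, so $m$ is convex on $[0, t^*]$, and together with $m(0)=0$, $m'(0)=1$ this forces $m(t) \ge t$ there. Were $t^*$ finite, continuity would give $m(t^*)=0$, contradicting $m(t^*) \ge t^* > 0$; hence $t^* = \infty$ and $m(t) \ge t$ for all $t \ge 0$. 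Since $G$, and hence $G_-$, is even in $t$, the solution $m$ extends to an odd function with $m'(0)=1$, so $(M^*, p^*)$ is a complete non-compact model surface of revolution.

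Next I would analyze the growth of $f$. Integrating $f'' = -Gf$ gives $f'(t) = 1 - \int_0^t G(s) f(s)\, ds$, and since $c(\wt M)$ is finite both $\int_0^\infty G_+ f$ and $\int_0^\infty G_- f$ converge, whence $\int_0^t G f \to \int_0^\infty G f = c(\wt M)/(2\pi)$ as $t \to \infty$. Thus $\lim_{t\to\infty} f'(t) = 1 - c(\wt M)/(2\pi) =: \alpha$, and the assumption $c(\wt M) < 2\pi$ is precisely what makes $\alpha > 0$. Choosing $T$ with $f'(t) \ge \alpha/2$ for $t \ge T$ and integrating yields constants $c_0 > 0$ and $T' \ge T$ with $f(t) \ge c_0 t$ for all $t \ge T'$. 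This linear lower bound on $f$ lets me dominate the relevant tail integral by the finite total-curvature integral:
\[
\int_{T'}^{\infty} t\,|G_-(t)|\,dt \;\le\; \frac{1}{c_0}\int_{T'}^{\infty} f(t)\,|G_-(t)|\,dt \;\le\; \frac{1}{c_0}\int_0^{\infty} f\,|G_-|\,dt \;<\; \infty ,
\]
while $\int_0^{T'} t\,|G_-(t)|\,dt < \infty$ because $G$ is bounded on $[0,T']$. Hence $\int_0^\infty t\, G_-(t)\, dt > -\infty$, and applying Lemma~\ref{lem4.1} to $(M^*, p^*)$ shows that $M^*$ admits a finite total curvature, as required.

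I expect the crux to be the passage from the weighted integral $\int_0^\infty f\,|G_-|$, which finiteness of $c(\wt M)$ controls, to the integral $\int_0^\infty t\,|G_-|$ demanded by Lemma~\ref{lem4.1} — that is, the estimate $f(t) \gtrsim t$; this is the one step where the quantitative hypothesis $c(\wt M) < 2\pi$ is genuinely used, rather than mere finiteness of $c(\wt M)$. The positivity of $m$ and the integrability of $t\,G_-(t)$ near $t=0$ are routine.
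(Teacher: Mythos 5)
Your proof is correct and takes essentially the same route as the paper: exploit $c(\wt M)<2\pi$ to get a linear lower bound $f(t)\gtrsim t$, use it to dominate $\int_0^\infty t\,|G_-|\,dt$ by the finite curvature integral $\int_0^\infty f\,|G_-|\,dt$, and then apply Lemma~\ref{lem4.1}. The only differences are cosmetic — you re-derive $\lim_{t\to\infty}f'(t)=1-c(\wt M)/2\pi$ by direct integration instead of citing \cite{SST}, and you add the routine positivity check for $m$ that the paper leaves implicit.
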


\begin{proof}
Since $\wt{M}$ admits a finite total curvature, 
it follows from (5.2.6) in \cite{SST} that 
$\lim_{t \to \infty} f'(t) \in \R$ exists, and also from \cite[Theorem 5.2.1]{SST} that 
\[
2 \pi \lim_{t \to \infty} f'(t) = \lim_{t \to \infty} \frac{2 \pi f(t)}{t} = 2 \pi - c(\wt{M})
\]
holds. 
Since $- \infty < c (\wt{M}) < 2 \pi$ and 
\[
\lim_{t \downarrow 0} \frac{f(t)}{t} = 1,
\]
there exists a positive constant $\alpha$ such that 
\[
\frac{f(t)}{t} > \frac{1}{\alpha}
\]
on $(0, \infty)$.
Thus, 
\begin{equation}\label{lem3.3-1}
\int_{0}^{\infty} t\,G_{-} (t)\,dt 
\ge 
\alpha \int_{0}^{\infty}  f(t)G_{-} (t)\,dt.
\end{equation}
Since $c(\wt{M})$ is finite, 
\begin{equation}\label{lem3.3-2}
- \infty < \int_{\wt{M}} G_{-} \circ t \,d\wt{M} = 2\pi \int_{0}^{\infty}  f(t)G_{-} (t)\,dt.
\end{equation} 
By (\ref{lem3.3-1}) and (\ref{lem3.3-2}), 
\[
\int_{0}^{\infty} t\,G_{-} (t)\,dt 
> 
-\infty.
\]
Therefore, by Lemma \ref{lem4.1}, 
we get the non-compact model surface of revolution $(M^{*}, p^{*})$ with 
the metric (\ref{sec4-metric}) whose total curvature is finite. 
$\qedd$
\end{proof} 

\noindent
{\bf The proof of Theorem \ref{thm1.3}:}
By Lemma \ref{lem4.2}, we have a non-compact model surface of revolution $(M^{*}, p^{*})$ with 
its metric (\ref{sec4-metric}) whose total curvature is finite. 
Since $G \ge G_{-} = \min\{G, 0\}$, $(M^{*}, p^{*})$ is the reference surface to the $(M,p)$. 
Moreover, $(M^{*}, p^{*})$ has no pair of cut points in a sector $\wt{V}(\delta)$ for all 
$\delta \in (0, \pi]$, since $0 \ge G_{-}$. Therefore, by Theorem \ref{thm4.3}, 
$M$ is homeomorphic to the interior of a compact manifold with boundary.
$\qedd$

\section{Fundamental Lemmas}\label{sec2}

We need several lemmas for constructing a family of peculiar surfaces of revolution: 
Let $K:[0,\infty)\lra \R$ be a continuous function and let $f:[0,\infty)\lra \R$ 
be a solution of the following differential equation
\begin{equation}\label{eq2-1}
f''(t)+K(t)f(t)=0.
\end{equation}
Here we assume that the solution $f$ satisfies 
\begin{equation}\label{eq2-3}
f>0,
\end{equation}
on $(0,\infty)$, and
\begin{equation}\label{eq2-4}
\int^\infty_1 f(t)^{-2}dt<\infty.
\end{equation}

\begin{lemma}\label{lem2.1}
Let $G:[0,\infty)\lra \R$ be a continuous function
and let $m$ be the solution of the differential equation 
\begin{equation}\label{eq2-8}
m''(t)+G(t)m(t)=0
\end{equation} 
with initial conditions $m(0)=f(0)$ and $m'(0)=f '(0)$.
If $G-K$ has a compact
support in a bounded interval $[a,b] \subset[1,\infty)$, 
then, for any $t\geq a,$
\begin{equation}\label{eq2-17}
|\sigma(t)|\leq\int_a^tf(t)^{-2}|m'f-mf'|dt
\end{equation}
holds. Here we set
\[
\sigma(t):=\frac{m}{f}(t)-1.
\]
\end{lemma}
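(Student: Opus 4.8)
The plan is to derive a first-order identity for $\sigma$ by exploiting the fact that both $f$ and $m$ solve second-order linear ODEs that agree outside the compact set $[a,b]$. First I would compute the Wronskian-type quantity $W(t) := m'f - mf'$. Differentiating and using \eqref{eq2-1} and \eqref{eq2-8} gives $W'(t) = m''f - mf'' = (-G m)f - m(-Kf) = (K-G)\,mf$, so $W$ is constant on $[b,\infty)$ and also constant (equal to its value at $0$) on $[0,a]$; since $m(0)=f(0)$ and $m'(0)=f'(0)$, that initial value is $W(0) = m'(0)f(0) - m(0)f'(0) = 0$. Hence $W(a) = 0$, and for $t \ge a$ we have $W(t) = \int_a^t (K-G)\,mf\,dt$, which in particular vanishes for $t \ge b$.

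Next I would relate $\sigma' = (m/f)' $ to $W$. A direct computation gives $(m/f)'(t) = \dfrac{m'f - mf'}{f^2}(t) = \dfrac{W(t)}{f(t)^2}$, using \eqref{eq2-3} so that division by $f^2$ is legitimate on $(0,\infty)$. Since $\sigma(t) = (m/f)(t) - 1$ and $\sigma(a) = (m/f)(a) - 1$, I would next argue that $\sigma(a) = 0$: indeed, on $[0,a]$ the function $W$ vanishes identically (it is constant and equals $W(0)=0$), so $(m/f)' \equiv 0$ on $(0,a]$, and because $\lim_{t\downarrow 0}(m/f)(t) = m'(0)/f'(0) = 1$ (both $m$ and $f$ vanish at $0$ with equal nonzero derivative), we get $(m/f)(a) = 1$, i.e. $\sigma(a)=0$. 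Therefore, integrating $\sigma' = W/f^2$ from $a$ to $t$ yields
\[
\sigma(t) = \int_a^t \frac{W(s)}{f(s)^2}\,ds = \int_a^t \frac{m'f - mf'}{f^2}(s)\,ds,
\]
and taking absolute values and pulling them inside the integral gives exactly \eqref{eq2-17}.

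I do not expect a serious obstacle here; the argument is an elementary manipulation of linear second-order ODEs. The one point requiring a little care is the behaviour at $t=0$: one must check that $m/f$ extends continuously to $0$ with value $1$, which follows from $f(0)=m(0)=0$ together with $f'(0)=m'(0)\neq 0$ (note $f'(0)=1$ in the standing setup, or in any case $f'(0)\ne 0$ since $f>0$ near $0$ and $f(0)=0$), so that L'Hôpital applies. After that, the identity $\sigma(a)=0$ and the monotonicity-free integration of $\sigma'=W/f^2$ on $[a,t]$ close the proof, and the hypothesis \eqref{eq2-4} is not even needed for this particular inequality (it will presumably be used in subsequent lemmas to bound the right-hand side).
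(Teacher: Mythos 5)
Your proof is correct and follows essentially the same route as the paper: the key identity $\sigma'=(m'f-mf')/f^{2}$, the vanishing of $\sigma$ up to $t=a$, and integration from $a$ to $t$ followed by taking absolute values. The only cosmetic difference is that you justify $\sigma(a)=0$ via the vanishing Wronskian and a limit at $t=0$, whereas the paper simply uses that $G=K$ on $[0,a]$ with identical initial data forces $m\equiv f$ there by uniqueness of solutions; both are fine.
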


\begin{proof}
Since 
\[
\sigma'(t)=\frac{1}{f^2}(m'f-mf')(t)
\]
and $\sigma(t)=0$ on $(0,a]$,
we obtain
\[
\sigma(t)= \int_a^t\frac{1}{f^2}(m'f-mf')(t)\; dt
\]
and hence
\[
|\sigma(t)|\leq\int_a^t\frac{1}{f^2}|m'f-mf'|dt.
\]
$\qedd$
\end{proof}

\begin{lemma}\label{lem2.2}
If  $G$ and $m$ are the functions defined in Lemma \ref{lem2.1},
then,
\begin{equation}\label{eq2-10}
|(m'f-mf')(t)|\leq (\alpha(m)+1)\cdot||G-K||_2\cdot||f^2|_{[a,b]}||_2
\end{equation}
holds on $[0,\infty).$
Here we set  
\[
||G-K||_2:=\sqrt{\int_0^\infty |(G-K)(t)|^2dt}, \quad ||f^2|_{[a,b]}||_2:=\sqrt{\int_a^b\: f(t)^4 dt,}
\]
and $\alpha(m):=\sup_{t\geq 0}|\sigma(t)|$.
\end{lemma}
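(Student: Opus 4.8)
The plan is to analyze the Wronskian $W(t) := (m'f - mf')(t)$ of $f$ and $m$, which is precisely the quantity appearing on the right-hand side of (\ref{eq2-17}). First I would differentiate $W$ and substitute the two differential equations (\ref{eq2-1}) and (\ref{eq2-8}): since $f'' = -Kf$ and $m'' = -Gm$, the terms $m'f'$ cancel and one gets
\[
W'(t) = (m''f - mf'')(t) = (K - G)(t)\,m(t)\,f(t).
\]
Because $G - K$ is supported in $[a,b]$, so is $W'$; moreover the initial conditions $m(0) = f(0)$, $m'(0) = f'(0)$ give $W(0) = 0$, hence $W \equiv 0$ on $[0,a]$. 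Consequently, for every $t \ge a$,
\[
W(t) = \int_a^t (K - G)(s)\,m(s)\,f(s)\,ds,
\]
and the integrand vanishes for $s \notin [a,b]$.

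Next I would rewrite the integrand in terms of the function $\sigma$ from Lemma \ref{lem2.1}. Since $m = f\,(1 + \sigma)$ we have $m(s)f(s) = f(s)^2\,(1 + \sigma(s))$, and therefore, using $|1 + \sigma(s)| \le 1 + |\sigma(s)| \le 1 + \alpha(m)$,
\[
|W(t)| \le \int_a^b |(G - K)(s)|\,f(s)^2\,|1 + \sigma(s)|\,ds \le (\alpha(m) + 1)\int_a^b |(G - K)(s)|\,f(s)^2\,ds .
\]
Applying the Cauchy--Schwarz inequality on $[a,b]$ and then enlarging the domain of the $|G-K|^2$ integral to $[0,\infty)$ yields
\[
\int_a^b |(G - K)(s)|\,f(s)^2\,ds \le \Big(\int_a^b |(G-K)(s)|^2\,ds\Big)^{1/2}\Big(\int_a^b f(s)^4\,ds\Big)^{1/2} \le ||G - K||_2\cdot ||f^2|_{[a,b]}||_2 .
\]
Combining the last two displays gives the asserted bound for all $t \ge a$, and since $W$ vanishes identically on $[0,a]$ the bound holds on all of $[0,\infty)$.

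This argument is essentially a direct computation, so I do not expect a real obstacle. The only point requiring care is the bookkeeping that ties the estimate to the \emph{truncated} norm $||f^2|_{[a,b]}||_2$ rather than to $||f^2||_2$, which need not be finite: the compact-support hypothesis on $G - K$ is used twice for this, first to force $W \equiv 0$ on $[0,a]$ and second to confine the Cauchy--Schwarz estimate of $f^4$ to $[a,b]$. One should also observe that if $\alpha(m) = \sup_{t \ge 0}|\sigma(t)| = +\infty$ the inequality is vacuously true, so there is no loss in treating $\alpha(m)$ as finite in the argument above.
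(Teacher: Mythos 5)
Your proof is correct and follows essentially the same route as the paper: differentiate the Wronskian $W = m'f - mf'$ using the two ODEs, note it vanishes on $[0,a]$ and its derivative is supported in $[a,b]$, estimate $|W|$ via $m = f(1+\sigma)$ and the bound $|1+\sigma| \le 1 + \alpha(m)$, and finish with the Cauchy--Schwarz inequality against the truncated norm $||f^2|_{[a,b]}||_2$. Your remark that the inequality is vacuous when $\alpha(m)=\infty$ is a harmless extra observation; no changes are needed.
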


\begin{proof}
Since the case where $t\in[0,a]$ is trivial, we assume that 
$t>a.$
By the equations (\ref{eq2-1}) and (\ref{eq2-8}),
\begin{equation}\label{eq2-14}
(fm'-f'm)'(t)=(K-G)fm(t).
\end{equation} 
Hence,
\begin{equation}\label{eq2-15}
(fm'-f'm)(t)=(fm'-f'm)(b)
\end{equation}
holds for any $t\geq b,$
since $G=K$ on $[b,\infty).$
By (\ref{eq2-14}), we get
\[
|(fm'-f'm)|(t)\leq \int_a^t|K-G|f^2(|\sigma|+1)\: dt
=(\alpha(m)+1)\int_a^t|K-G|f^2\: dt.
\]
Now, it is clear from the Shwarz inequailty and (\ref{eq2-15}) that (\ref{eq2-10}) holds for any $t\geq 0$.
$\qedd$
\end{proof}

\begin{lemma}\label{lem2.3}
Set 
\[
C(f,a,b):=\int_a^\infty\frac{1}{f^2}\; dt\cdot||f^2|_{[a,b]}||_2 \ (> 0).
\]
If 
\[
C(f,a,b) < \frac{1}{||G-K||_2},
\]
then
\begin{equation}\label{eq2-22}
\alpha(m)\leq\frac{C(f,a,b)||G-K||_2}{1-C(f,a,b)||G-K||_2}.
\end{equation}
\end{lemma}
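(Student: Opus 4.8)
The plan is to feed the pointwise bound of Lemma~\ref{lem2.2} into the integral estimate of Lemma~\ref{lem2.1} and then solve the resulting inequality for $\alpha(m)$. Before doing the algebra, I would first check that $\alpha(m)$ is finite, since that is what makes the final rearrangement legitimate. Because the support of $G-K$ lies in $[a,b]$, the functions $f$ and $m$ both solve (\ref{eq2-1}) on $[b,\infty)$, so the Wronskian $m'f-mf'$ is constant there; hence from $\sigma'=f^{-2}(m'f-mf')$ and the integrability assumption (\ref{eq2-4}) the limit $\lim_{t\to\infty}\sigma(t)$ exists in $\R$. Combined with the continuity of $\sigma$ on the compact interval $[0,b]$ and the fact that $\sigma\equiv 0$ on $(0,a]$, this shows $\alpha(m)=\sup_{t\ge 0}|\sigma(t)|<\infty$.

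Next I would combine the two lemmas. For $t\ge a$, substitute (\ref{eq2-10}) into the integrand of (\ref{eq2-17}); since the right-hand side of (\ref{eq2-10}) does not depend on the integration variable, it factors out of the integral, giving
\[
|\sigma(t)|\le(\alpha(m)+1)\,||G-K||_2\,||f^2|_{[a,b]}||_2\int_a^t f(s)^{-2}\,ds
\le(\alpha(m)+1)\,||G-K||_2\,C(f,a,b),
\]
where the last step enlarges $\int_a^t$ to $\int_a^\infty$ (the integrand being positive, and $a\ge 1$ so this is finite by (\ref{eq2-4})) and invokes the definition of $C(f,a,b)$.

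Finally, since $\sigma$ vanishes on $(0,a]$, taking the supremum over $t\ge 0$ turns this into $\alpha(m)\le(\alpha(m)+1)\,C(f,a,b)\,||G-K||_2$. Writing $\beta:=C(f,a,b)\,||G-K||_2$, the hypothesis $C(f,a,b)<1/||G-K||_2$ says exactly that $\beta<1$; and because $\alpha(m)<\infty$ I may rearrange $\alpha(m)(1-\beta)\le\beta$ into $\alpha(m)\le\beta/(1-\beta)$, which is (\ref{eq2-22}). I do not expect a genuine obstacle here: the only subtlety is the finiteness of $\alpha(m)$, which must be verified before the last step, and everything else is a direct composition of Lemmas~\ref{lem2.1} and~\ref{lem2.2} together with the Schwarz-inequality bookkeeping already carried out in their proofs.
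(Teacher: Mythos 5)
Your proof is correct and follows essentially the same route as the paper: substitute the bound (\ref{eq2-10}) of Lemma \ref{lem2.2} into the integral estimate (\ref{eq2-17}) of Lemma \ref{lem2.1}, enlarge the integral to $\int_a^\infty f^{-2}dt$, and rearrange using $C(f,a,b)\,||G-K||_2<1$. Your extra verification that $\alpha(m)<\infty$ (via constancy of the Wronskian on $[b,\infty)$ together with (\ref{eq2-4})) is a sensible addition that the paper leaves implicit, since the final rearrangement does require it.
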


\begin{proof}
Since $\sigma(t)=0$ for any $t\in[0,a],$ it follows from (\ref{eq2-17}) and (\ref{eq2-10}) that
\[
\sup_{t\geq 0}|\sigma(t)|\leq C(f,a,b)\cdot||G-K||_2(\alpha(m)+1).
\]
Thus, it is clear that (\ref{eq2-22}) holds.
$\qedd$
\end{proof}

\begin{lemma}\label{lem2.4}
The equations
\begin{equation}\label{eq2-24}
\int_a^b|Gm-Kf|dt\leq(\alpha(m)+1)||G-K||_2\cdot||f|_{[a,b]}||_2
+\alpha(m)\int_a^b|f''|dt
\end{equation}
and 
\begin{equation}\label{eq2-24_2011_18}
\int_b^\infty|Gm-Kf|dt\leq \alpha(m)\int_b^\infty|f''|dt
\end{equation}
hold.
Hence, we get
\begin{equation}\label{eq2-26}
\int_0^\infty|Gm-Kf|dt\leq
\alpha(m)\int_a^\infty|f''|dt+(\alpha(m)+1)||G-K||_2\cdot||f|_{[a,b]}||_2.
\end{equation}
\end{lemma}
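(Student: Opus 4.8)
The plan is to cut $\int_0^\infty|Gm-Kf|\,dt$ into the three pieces $[0,a]$, $[a,b]$, $[b,\infty)$ determined by $\supp(G-K)\subset[a,b]$, and to treat each using the algebraic identity
\[
Gm-Kf=(G-K)m+K(m-f)=(G-K)m+K\sigma f
\]
together with $Kf=-f''$ (from (\ref{eq2-1})) and the uniform bound $|\sigma|\le\alpha(m)$.

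First I would note that on $[0,a]$ we have $G\equiv K$, and since $m$ and $f$ solve the same second-order linear ODE there with the same data at $0$, uniqueness gives $m\equiv f$ on $[0,a]$ (this is the observation $\sigma\equiv0$ on $(0,a]$ already used in Lemma \ref{lem2.1}); hence $Gm-Kf\equiv0$ there and this interval contributes nothing. On $[a,b]$ I would apply the displayed identity: for the term $(G-K)m$ use $|m|=|1+\sigma|\,f\le(\alpha(m)+1)f$ (recall $f>0$) and then the Schwarz inequality, noting that $G-K$ is supported in $[a,b]$, to get $\int_a^b|G-K|\,|m|\,dt\le(\alpha(m)+1)\,||G-K||_2\cdot||f|_{[a,b]}||_2$; for the term $K\sigma f$ use $|K\sigma f|\le\alpha(m)\,|K|f=\alpha(m)\,|f''|$ to get $\int_a^b|K\sigma f|\,dt\le\alpha(m)\int_a^b|f''|\,dt$. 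The triangle inequality then yields (\ref{eq2-24}). On $[b,\infty)$ we again have $G\equiv K$, so $Gm-Kf=K\sigma f$, and the same estimate gives $\int_b^\infty|Gm-Kf|\,dt\le\alpha(m)\int_b^\infty|f''|\,dt$, which is (\ref{eq2-24_2011_18}). Adding the three contributions and using $\int_a^b|f''|\,dt+\int_b^\infty|f''|\,dt=\int_a^\infty|f''|\,dt$ produces (\ref{eq2-26}).

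I do not expect a real obstacle: every step is elementary (triangle inequality, $L^2$ Schwarz inequality, and the sup-bound on $\sigma$), and there is no limiting argument. The one point worth care is the choice of splitting $Gm-Kf=(G-K)m+K\sigma f$ rather than $(G-K)f+G\sigma f$: the factor $Kf=-f''$ is precisely the quantity appearing on the right-hand sides, whereas $Gf$ or $Gm$ admit no comparably clean pointwise description. Also, $|\sigma|\le\alpha(m)$ is available because $\sigma$ vanishes on $[0,a]$, so its global supremum $\alpha(m)$ dominates it on $[a,b]$ and on $[b,\infty)$; and if $\alpha(m)$ or $\int_a^\infty|f''|\,dt$ happens to be infinite the asserted inequalities hold trivially.
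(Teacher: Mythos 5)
Your proposal is correct and follows essentially the same route as the paper: your splitting $Gm-Kf=(G-K)m+K\sigma f=(G-K)f(\sigma+1)+K f\sigma$ is exactly the paper's identity (\ref{eq2-27}), after which both arguments use the triangle inequality, the Schwarz inequality on $[a,b]$, the substitution $Kf=-f''$, and the vanishing of $G-K$ (hence $\sigma$ on $[0,a]$) outside $[a,b]$. No discrepancies to report.
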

\begin{proof}
Since 
\begin{equation}\label{eq2-27}
(Gm-Kf)(t)=(G-K)(t)f(t)(\sigma(t)+1)+K(t)f(t)\sigma(t),
\end{equation}
we get, by the triangle inequality, 
\begin{equation}\label{eq2-28}
|Gm-Kf|(t)\leq(\alpha(m)+1)|G-K|(t)f(t)+\alpha(m)|K(t)f(t)|.
\end{equation} 
From the Shwarz inequality, it follows that
\begin{equation}\label{eq2-29}
\int_a^b|G-K|(t)f(t)dt\leq(\alpha(m)+1)||G-K||_2\cdot||f|_{[a,b]}||_2+\alpha(m)\int_a^b|Kf|dt.
\end{equation}
The equation (\ref{eq2-24}) is clear from (\ref{eq2-29}),
since
$Kf=-f''$ by (\ref{eq2-1}). Since $\supp(G-K)\subset[a,b],$ $G=K$ on $[b,\infty).$ 
Hence, 
$|Gm-Kf|(t)=|Kf\sigma(t)|\leq\alpha(m)|Kf|(t)$
on $[b,\infty)$ and
$Gm(t)=Kf(t) $ on $[0,a].$ Now, the equations (\ref{eq2-24_2011_18}) and (\ref{eq2-26}) are clear.
$\qedd$
\end{proof}
 
\begin{lemma}\label{lem2.5}
If $\alpha(m)<1,$ then $m(t)>0$ on $(0,\infty)$ and
 \begin{equation}\label{eq2-30}
 \int_1^\infty|f(t)^{-2}-m(t)^{-2}|dt\leq\frac{(2+\alpha(m))\alpha(m)}{(1-\alpha(m))^2}\int_a^\infty f(t)^{-2}dt.
 \end{equation}
 \end{lemma}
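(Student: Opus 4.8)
The plan is to express everything in terms of $\sigma$ and reduce the statement to an elementary pointwise estimate, then integrate.

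First I would settle the positivity of $m$. On $(0,a]$ the functions $m$ and $f$ solve the same equation $y''+Ky=0$ (since $\supp(G-K)\subset[a,b]$ forces $G=K$ there) with the same initial data at $t=0$, so $m=f$ on $[0,a]$ and in particular $\sigma\equiv 0$ on $(0,a]$; this was already observed in the proof of Lemma \ref{lem2.1}. On $(a,\infty)$ we have $m=f\,(1+\sigma)$ with $f>0$, and since $|\sigma(t)|\le\alpha(m)<1$ the factor $1+\sigma$ stays $\ge 1-\alpha(m)>0$. Hence $m>0$ on all of $(0,\infty)$.

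For the integral estimate I would substitute $m=f(1+\sigma)$ to obtain the exact identity
\[
\frac{1}{f^{2}}-\frac{1}{m^{2}}
=\frac{1}{f^{2}}\left(1-\frac{1}{(1+\sigma)^{2}}\right)
=\frac{1}{f^{2}}\cdot\frac{\sigma(\sigma+2)}{(1+\sigma)^{2}}
\]
on $(0,\infty)$, and then bound each factor by $|\sigma|\le\alpha(m)$, $|\sigma+2|\le 2+\alpha(m)$, and $(1+\sigma)^{2}\ge(1-\alpha(m))^{2}$ (all valid because $\alpha(m)<1$ keeps $1+\sigma$ bounded away from $0$), which gives
\[
\left|\frac{1}{f(t)^{2}}-\frac{1}{m(t)^{2}}\right|
\le\frac{(2+\alpha(m))\,\alpha(m)}{(1-\alpha(m))^{2}}\cdot\frac{1}{f(t)^{2}}
\qquad\text{for all }t\in(0,\infty).
\]

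Finally I would integrate this inequality over $[1,\infty)$. Since $\sigma\equiv 0$ on $[1,a]$ (as $a\ge 1$), the left-hand side vanishes there, so the integral over $[1,\infty)$ coincides with the integral over $[a,\infty)$; combining this with the pointwise bound and with $\int_{a}^{\infty}f^{-2}\,dt\le\int_{1}^{\infty}f^{-2}\,dt<\infty$ from (\ref{eq2-4}) yields exactly (\ref{eq2-30}). There is no genuine obstacle here: the only points needing a little care are checking that each elementary inequality points in the right direction — which is precisely where the hypothesis $\alpha(m)<1$ enters — and noticing that the integrand is supported in $[a,\infty)$, which is what lets the lower limit be sharpened from $1$ to $a$ on the right-hand side.
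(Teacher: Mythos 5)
Your proposal is correct and follows essentially the same route as the paper: positivity of $m$ from $m=f(1+\sigma)$ with $1+\sigma\ge 1-\alpha(m)>0$, the pointwise bound $|f^{-2}-m^{-2}|\le\frac{(2+\alpha(m))\alpha(m)}{(1-\alpha(m))^{2}}f^{-2}$ (the paper reaches it via monotonicity of $(x+2)/(1-x)^{2}$ on $[0,1)$ rather than bounding each factor directly, a purely cosmetic difference), and finally integrating over $[a,\infty)$ after noting $m=f$ on $[0,a]$.
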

 \begin{proof}
Since $\sigma(t)\geq -\sigma(m)>-1$ for any $t\in[0,\infty),$ it is clear that $m(t)$ is positive on $(0,\infty)$. By definition, 
$m(t)^{-2}=(\sigma+1)^{-2}f(t)^{-2}$ holds. Hence, we get 
$$|f(t)^{-2}-m(t)^{-2}|=f(t)^{-2}|(\sigma+1)^{-2}-1|\leq\alpha(m)\cdot f(t)^{-2}\frac{|\sigma(t)|+2}{(1-|\sigma(t)|)^2}.$$  
Since the function $(x+2 )/ (1-x)^2$ is increasing on $[0,1),$
\begin{equation}\label{eq2-31}
|f(t)^{-2}-m(t)^{-2}|\leq\frac{\alpha(m)(2+\alpha(m))}{(1-\alpha(m))^2}f(t)^{-2}.
\end{equation}
Since $G=K$ on $[0,a],$ $f=m$ on $[0,a].$ Therefore, by (\ref{eq2-31}),
\[
\int_1^\infty |f(t)^{-2}-m(t)^{-2}|dt
=
\int_a^\infty|f(t)^{-2}-m(t)^{-2}|dt
\leq
\frac{\alpha(m)(2+\alpha(m))}{(1-\alpha(m))^2}\int_a^\infty f(t)^{-2}dt.
\]
$\qedd$
\end{proof}
 
\begin{proposition}\label{prop2.6}
Let $K:[0,\infty)\lra \R$ be a continuous function and let $f:[0,\infty)\lra \R$ be the solution of the differential equation of (\ref{eq2-1}) with initial conditions $f(0)=0$ and $ f'(0)=1$. 
Suppose that the solution $f$ satisfies (\ref{eq2-3}), (\ref{eq2-4}) and 
\[
 \int_0^\infty |f''(t)|dt<\infty.
\]
Then, for any $\ve>0$ and any bounded interval $(a,b)\subset[1,\infty),$
there exists $\delta>0$ such that for any continuous function $G:[0,\infty)\lra \R$ satisfying
$\supp(G-K)\subset[a,b]$ and 
\[
||G-K||_2:=\sqrt{\int_0^\infty|G-K|^2dt}<\delta,
\]
the solution $m$ of the differential equation 
$m''(t)+G(t)m(t)=0$ with initial conditions
$m(0)=0$ and $m'(0)=1$,
satisfies 
\begin{equation}\label{eq2-34}
\int_0^\infty |Gm(t)-Kf(t)|dt<\ve,
\end{equation}
and
\begin{equation}\label{eq2-35}
\int_1^\infty|m(t)^{-2}-f(t)^{-2}|dt<\ve.
\end{equation}
\end{proposition}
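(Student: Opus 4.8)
The plan is to read off everything from Lemmas \ref{lem2.3}, \ref{lem2.4} and \ref{lem2.5}. The point is that each of these three lemmas estimates the quantity of interest purely in terms of the single number $\alpha(m)=\sup_{t\ge 0}|\sigma(t)|$, while Lemma \ref{lem2.3} shows that $\alpha(m)$ is \emph{forced} to be small as soon as $\|G-K\|_2$ is small, with a rate governed only by the constant $C(f,a,b)$, which depends on $f$, $a$, $b$ but not on $G$. So the whole argument is a single $\ve$--$\delta$ chase, with no new analytic input beyond the three standing hypotheses on $f$.

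First I would fix $\ve>0$ and the interval $(a,b)\subset[1,\infty)$ and record that the four constants
\[
C:=C(f,a,b),\qquad I:=\int_0^\infty|f''(t)|\,dt,\qquad J:=\int_a^\infty f(t)^{-2}\,dt,\qquad L:=\sqrt{\int_a^b f(t)^2\,dt}
\]
are all finite, and that $C,L>0$: $C$ and $J$ are finite by (\ref{eq2-4}) together with the continuity (and positivity) of $f$ on the compact interval $[a,b]$, $I$ is finite by the hypothesis $\int_0^\infty|f''|\,dt<\infty$, and $L$ is trivially finite. Next I would choose $\eta\in(0,1)$ so small that $\eta I<\ve/2$ and $\dfrac{(2+\eta)\eta}{(1-\eta)^2}\,J<\ve$, which is possible since both expressions tend to $0$ as $\eta\downarrow 0$; and then set $\delta:=\min\bigl\{\,\tfrac{\eta}{C(1+\eta)}\,,\ \tfrac{\ve}{2(1+\eta)L}\,\bigr\}>0$.

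With these choices, let $G$ be any continuous function with $\supp(G-K)\subset[a,b]$ and $\|G-K\|_2<\delta$. Since $C\|G-K\|_2\le C\delta\le \eta/(1+\eta)<1$, Lemma \ref{lem2.3} applies, and, using that $x\mapsto x/(1-x)$ is increasing on $[0,1)$, gives $\alpha(m)\le \dfrac{C\delta}{1-C\delta}\le \eta<1$; in particular $m>0$ on $(0,\infty)$ by Lemma \ref{lem2.5}, so $m^{-2}$ is defined. Substituting $\alpha(m)\le\eta$ and $\|G-K\|_2<\delta$ into (\ref{eq2-26}) (and bounding $\int_a^\infty|f''|\le I$, $\|f|_{[a,b]}\|_2=L$) yields
\[
\int_0^\infty|Gm-Kf|\,dt\ \le\ \alpha(m)\,I+(\alpha(m)+1)\|G-K\|_2\,L\ \le\ \eta I+(1+\eta)\delta L\ <\ \ve ,
\]
which is (\ref{eq2-34}); and substituting $\alpha(m)\le\eta$ into (\ref{eq2-30}), using that $x\mapsto(2+x)x/(1-x)^2$ is increasing on $[0,1)$, yields $\int_1^\infty|m^{-2}-f^{-2}|\,dt\le\dfrac{(2+\eta)\eta}{(1-\eta)^2}\,J<\ve$, which is (\ref{eq2-35}).

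The one place that needs care is the order of quantifiers in the estimate for $\int_0^\infty|Gm-Kf|\,dt$: there $\|G-K\|_2$ enters twice, once \emph{indirectly} as the driver that makes $\alpha(m)$ small (through Lemma \ref{lem2.3}) and once \emph{directly} through the term $(\alpha(m)+1)\|G-K\|_2\,L$. Hence one must choose $\eta$ \emph{first}, small enough to absorb the contributions proportional to $\alpha(m)$ that come from the possibly large constants $I$ and $J$, and only afterwards choose $\delta$, small enough both to force $\alpha(m)\le\eta$ via Lemma \ref{lem2.3} and to render the residual term linear in $\delta$ smaller than $\ve/2$. Apart from this bookkeeping the proposition is an immediate consequence of Lemmas \ref{lem2.3}--\ref{lem2.5}.
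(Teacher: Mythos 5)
Your proof is correct and follows essentially the same route as the paper: bound $\alpha(m)$ via Lemma \ref{lem2.3}, then feed that bound into (\ref{eq2-26}) for (\ref{eq2-34}) and into (\ref{eq2-30}) for (\ref{eq2-35}), with a suitable smallness choice on $\|G-K\|_2$. Your version merely makes the paper's final "$\exists\,\delta\in(0,\delta_1]$" step explicit by fixing $\eta$ first and writing out $\delta$, which is a presentational difference, not a different argument.
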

 
\begin{proof}
Let $\ve$ be an arbitrarily fixed number. 
Here we choose a positive 
number $\delta_1\in(0,1/C(f,a,b))$ in such a way that
 \begin{equation}\label{eq2-36}
 \frac{\delta_1}{1-C(f,a,b)\delta_1}||f|_{[a,b]}||_2<\frac{\ve}{2}
 \end{equation}
 and
 \begin{equation}\label{eq2-37}
 \frac{C(f,a,b)\delta_1}{1-C(f,a,b)\delta_1}\int_a^\infty|f''|dt<\frac{\ve}{2}
 \end{equation}
 hold. Then, it follows from Lemma {\ref{lem2.3}}, (\ref{eq2-36}), and (\ref{eq2-37})  
 that for any continuous function $G:[0,\infty)\lra \R$ satisfying 
 $\supp(G-K)\subset[a,b]$ and $||G-K||_2<\delta_1,$ the solution $m$ satisfies 
\[
\alpha(m)\int_a^\infty|f''|dt<\frac{\ve}{2}
\]
and 
\[
(\alpha(m)+1)||G-K||_2\cdot||f|_{[a,b]}||_2<\frac{\ve}{2}.
\]
Now, the equation (\ref{eq2-34}) 
is clear from (\ref{eq2-26}). Moreover, by the equations (\ref{eq2-30}) and (\ref{eq2-22}), 
there exists $\delta\in(0,\delta_1]$ such that   for any continuous function $G:[0,\infty)\lra \R$ 
satisfying $\supp(G-K)\subset[a,b]$ and $||G-K||_2<\delta<\delta_1,$ the solution $m$ 
satisfies (\ref{eq2-34}) and (\ref{eq2-35}).$\qedd$
\end{proof}

The following proposition is clear from Lemmas \ref{lem2.1}, \ref{lem2.2}, \ref{lem2.3}, \ref{lem2.5} and the proof of Proposition \ref{prop2.6}.

\begin{proposition}\label{prop2.7} 
Let $K:[0,\infty)\lra \R$ be a continuous function and let $f:[0,\infty) \lra \R$ 
be the solution of the differential equation of (\ref{eq2-1}) with initial conditions $f(0)=0$ and $f'(0)=1$. Suppose that the solution $f$ satisfies (\ref{eq2-3}) and (\ref{eq2-4}). 
Then, for any $\ve>0$ and any bounded interval $(a,b)\subset[1,\infty),$
there exists $\delta>0$ such that for any continuous function $G:[0,\infty)\lra \R$ satisfying
$\supp(G-K)\subset[a,b]$, and 
\[
||G-K||_2:=\sqrt{\int_0^\infty|G-K|^2dt}<\delta,
\]
the solution $m$ of the differential equation
$m''(t)+G(t)m(t)=0$ with initial conditions
$m(0)=0$ and $m'(0)=1$, satisfies the equation (\ref{eq2-35}).
\end{proposition}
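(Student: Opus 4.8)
The plan is to read Proposition~\ref{prop2.7} as the version of Proposition~\ref{prop2.6} in which the hypothesis $\int_0^\infty|f''(t)|\,dt<\infty$ has been dropped and only the conclusion (\ref{eq2-35}) is retained. In the proof of Proposition~\ref{prop2.6} that hypothesis entered exclusively through Lemma~\ref{lem2.4} and the estimate (\ref{eq2-26}), hence only to secure (\ref{eq2-34}); the second conclusion (\ref{eq2-35}) rests solely on Lemma~\ref{lem2.5}, whose hypotheses are merely $\alpha(m)<1$ and $\int_a^\infty f(t)^{-2}\,dt<\infty$. So the whole argument reduces to choosing $\delta$ so small that (i) Lemma~\ref{lem2.3} is applicable, (ii) the resulting bound on $\alpha(m)$ is $<1$, and (iii) the right-hand side of (\ref{eq2-30}) is below $\ve$.

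First I would record that $C(f,a,b)$ is a finite positive number: $\int_a^\infty f(t)^{-2}\,dt<\infty$ because $a\ge 1$ and $f$ satisfies (\ref{eq2-4}), while $\|f^2|_{[a,b]}\|_2=\sqrt{\int_a^b f(t)^4\,dt}$ is finite and positive since $f$ is continuous and, by (\ref{eq2-3}), positive on the compact interval $[a,b]$. Next I would pick any $\delta_1\in(0,1/C(f,a,b))$, so that for every continuous $G$ with $\supp(G-K)\subset[a,b]$ and $\|G-K\|_2<\delta_1$ the hypothesis of Lemma~\ref{lem2.3} holds and (\ref{eq2-22}) gives, for the solution $m$ of $m''+Gm=0$ with $m(0)=0$, $m'(0)=1$,
\[
\alpha(m)\le\frac{C(f,a,b)\,\|G-K\|_2}{1-C(f,a,b)\,\|G-K\|_2}.
\]
The right-hand side is increasing in $\|G-K\|_2$ and tends to $0$ as $\|G-K\|_2\downarrow 0$, and the function $x\mapsto(2+x)x/(1-x)^2$ is continuous and increasing on $[0,1)$ with value $0$ at $x=0$. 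Hence I can fix $\delta\in(0,\delta_1]$ small enough that, writing $\alpha_0:=C(f,a,b)\delta/(1-C(f,a,b)\delta)$, one has $\alpha_0<1$ and
\[
\frac{(2+\alpha_0)\,\alpha_0}{(1-\alpha_0)^2}\int_a^\infty f(t)^{-2}\,dt<\ve.
\]

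Finally, for any continuous $G:[0,\infty)\lra\R$ with $\supp(G-K)\subset[a,b]$ and $\|G-K\|_2<\delta$, the displayed bound gives $\alpha(m)\le\alpha_0<1$, so Lemma~\ref{lem2.5} applies: $m(t)>0$ on $(0,\infty)$ and, using (\ref{eq2-30}) together with the monotonicity of $x\mapsto(2+x)x/(1-x)^2$,
\[
\int_1^\infty|m(t)^{-2}-f(t)^{-2}|\,dt\le\frac{(2+\alpha(m))\,\alpha(m)}{(1-\alpha(m))^2}\int_a^\infty f(t)^{-2}\,dt\le\frac{(2+\alpha_0)\,\alpha_0}{(1-\alpha_0)^2}\int_a^\infty f(t)^{-2}\,dt<\ve,
\]
which is exactly (\ref{eq2-35}). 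There is no genuine obstacle here beyond bookkeeping; the one point worth verifying carefully is the claim made at the outset, namely that none of Lemmas~\ref{lem2.1}, \ref{lem2.2}, \ref{lem2.3}, \ref{lem2.5} (nor the part of the proof of Proposition~\ref{prop2.6} that produces the bound $\alpha(m)\le\alpha_0$) invokes $\int_0^\infty|f''(t)|\,dt<\infty$, so that omitting this hypothesis — which in Proposition~\ref{prop2.6} affected only (\ref{eq2-34}) — leaves the derivation of (\ref{eq2-35}) untouched.
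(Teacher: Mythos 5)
Your argument is correct and is exactly the route the paper intends: the paper disposes of Proposition \ref{prop2.7} by remarking that it is clear from Lemmas \ref{lem2.1}--\ref{lem2.3}, \ref{lem2.5} and the proof of Proposition \ref{prop2.6}, the point being precisely that the hypothesis $\int_0^\infty|f''|\,dt<\infty$ enters only via Lemma \ref{lem2.4} and hence only into (\ref{eq2-34}). Your write-up simply makes explicit the choice of $\delta$ forcing $\alpha(m)\le\alpha_0<1$ and the smallness of the right-hand side of (\ref{eq2-30}), which is the detail the paper leaves implicit.
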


\section{The Construction of a Peculiar Model} \label{construction}

\noindent
{\bf The proof of Theorem \ref{thm1.4}:} 
From the isoperimetric inequalities (see \cite[Theorem 5.2.1]{SST}) 
and the l'H\^opital's theorem, it follows that 
\[
2\pi\lim_{t\to\infty}f'(t)=\lim_{t\to\infty}\frac{2\pi f(t)}{t}=2\pi-c(\wt M).
\]
Hence, the property $c(\wt M)<2\pi$ implies that 
\[
\lim_{t\to\infty}\frac{f(t)}{t}=\lim_{t\to\infty}f'(t)>0.
\]
In particular, 
\[
\int_1^\infty f(t)^{-2}dt<\infty.
\]
Since $c(\wt M)$ is finite,
 $$2\pi\int_0^\infty|K(t)|f(t)dt<\infty.$$
 This is equivalent to 
 $$\int_0^\infty |f''(t)|dt<\infty.$$
By applying Proposition \ref{prop2.6} for the interval $(3/2,5/2)$ and $\ve / 9\pi,$ 
we may find a smooth function $G_1 : [0,\infty)\lra \R$ such that 
$||G_1-K||_2< \ve / 3^2$, 
$K\geq G_1$ on $[0,\infty)$, 
$\supp(K-G_1)\subset (3/2,5/2)$,
\[
\int_0^\infty |m_1''-f''|dt<\frac{\ve}{9\pi}, \quad 
\int_1^\infty|m_1^{-2}-f^{-2}|dt<\frac{\ve}{9\pi}<\frac{\ve}{9},
\]
and
$\min\{G_1(t) ; 3/2\leq t \leq 5/2\}\leq -1$.
Here $m_1$ denotes the solution
$m_1''+G_1m_1=0$ 
with initial conditions $m_1(0)=0$ and $m_1'(0)=1.$ 
By applying Proposition \ref{prop2.6}, it is easy to define a sequence 
of smooth functions $\{G_k:[0,\infty)\lra \R\}_{k\geq 0},$ where $G_0=K,$ satisfying
$||G_k-G_{k-1}||_2< \ve / 3^{k+1}$, $G_{k-1}\geq G_k$ on $[0,\infty)$,
$\supp(G_k-G_{k-1})\subset(2k-1/2,2k+1/2)$, 
\[
\int_0^\infty|m_k''-m_{k-1}''|dt<\frac{\ve}{3^{k+1}\pi},
\quad 
\int_1^\infty|m_k^{-2}-m_{k-1}^{-2}|dt<\frac{\ve}{3^{k+1}},
\]
and $\min\{G_k(t); 2k-1/2\leq t\leq 2k+1/2\} \leq -k$.
Here $m_k$ denotes the solution of
$m_k''+G_km_k=0$ with initial conditions $m_k(0)=0$ and $m_k'(0)=1$.
We define ${m_\ve}(t):=\lim_{k\to\infty}m_k(t)$ and ${G_\ve}(t):=\lim_{k\to\infty}G_k(t)$. 
It is easy to check that $m_\ve(t) $ is the solution of 
${m_\ve}''+{G_\ve }(t){m_\ve}(t)=0$ with initial conditions 
${m_\ve}(0)=0$ and ${m_\ve}'(0)=1$.
Furthermore, the function $m_\ve$ and $G_\ve{}$ satisfy
\begin{equation}\label{eq3-23}
\int_0^\infty |{m_\ve}''  - f''|dt \leq \frac{\ve}{3\pi},
\end{equation}
$\liminf_{t\to\infty} G_\ve(t)=-\infty$, 
$K\geq G_\ve \ {\rm on} \ [0,\infty)$, 
and 
$||G_\ve -K||_2\leq \ve / 3<\ve$. 
The equation (\ref{eq3-23}) implies that
\[
|c(\wt{M}_\ve^{-})-c(\wt M)|\leq 2\pi\int_0^\infty|{m_\ve}''-f''|dt\leq
\frac{2\ve}{3}<\ve,
\]
where $\wt{M}_\ve^{-}$ is a non-compact model surface of revolution such that 
$\wt{M}_\ve^{-} :=(\R^2,dt^2+m_\ve^-(t)d\theta^2)$ and $m_\ve^-(t):= m_\ve(t)$.
$\qedd$

\bigskip

The proof of the following theorem is  similar to that of the theorem above.

\begin{theorem}\label{thm3.1}
Let $\wt M:=(\R^2,dt^2+f(t)^2d\theta^2)$ denote a non-compact model 
surface of revolution which admits a finite total curvature $c(\wt M)$ less than $2\pi.$ 
Then, for any $\ve>0,$ there exists a non-compact model surface of revolution 
$\wt M_\ve^+:=(\R^2,dt^2+m_\ve^+(t)d\theta^2)$ 
such that 
$G_\ve^+\geq K$ on $[0,\infty)$, $||G_\ve^{+}-K||_2<\ve$, 
$\limsup_{t\to \infty}G_\ve^+(t)=\infty$, and
$|c(\wt M)-c(M_\ve^{+})|<\ve$, where we denote by 
$K :=- f'' / f, G_{\ve}^{+}:=- m_{\ve}^{+}{}^{''} / {m_\ve}^+$ 
the radial curvature of $\wt M, \wt M_\ve^+$ respectively.
\end{theorem}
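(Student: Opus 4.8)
The plan is to mirror, essentially verbatim, the construction used in the proof of Theorem \ref{thm1.4}, but replacing the downward bumps by upward ones. First I would record the same preliminary facts that open that proof: since $c(\wt M)<2\pi$, the isoperimetric inequalities and l'H\^opital give $\lim_{t\to\infty}f(t)/t=\lim_{t\to\infty}f'(t)>0$, hence $\int_1^\infty f(t)^{-2}\,dt<\infty$, i.e.\ hypothesis (\ref{eq2-4}) holds; and finiteness of $c(\wt M)$ gives $\int_0^\infty|f''(t)|\,dt<\infty$. Together with $f>0$ on $(0,\infty)$, these are exactly the hypotheses of Proposition \ref{prop2.6}.

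Next I would build the sequence $\{G_k\}_{k\ge 0}$ with $G_0=K$ by repeated application of Proposition \ref{prop2.6}: at step $k$ choose a smooth $G_k$ with $\supp(G_k-G_{k-1})\subset(2k-\tfrac12,2k+\tfrac12)$, with $G_k\ge G_{k-1}$ on $[0,\infty)$ (this is the only sign change from Theorem \ref{thm1.4}, and it costs nothing since Proposition \ref{prop2.6} places no sign restriction on $G-K$), with $\|G_k-G_{k-1}\|_2<\ve/3^{k+1}$, with
\[
\int_0^\infty |m_k''-m_{k-1}''|\,dt<\frac{\ve}{3^{k+1}\pi},\qquad
\int_1^\infty |m_k^{-2}-m_{k-1}^{-2}|\,dt<\frac{\ve}{3^{k+1}},
\]
and with $\max\{G_k(t)\;:\;2k-\tfrac12\le t\le 2k+\tfrac12\}\ge k$. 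Here $m_k$ solves $m_k''+G_km_k=0$, $m_k(0)=0$, $m_k'(0)=1$. Applying Proposition \ref{prop2.6} at step $k$ is legitimate because $m_{k-1}$ again satisfies (\ref{eq2-3}), (\ref{eq2-4}) and $\int_0^\infty|m_{k-1}''|<\infty$ — the first two by Lemma \ref{lem2.5} (the $\alpha(m)<1$ condition is guaranteed by shrinking each $\delta$), and the last by the telescoping bound $\int_0^\infty|m_{k-1}''|\le\int_0^\infty|f''|+\sum_{j=1}^{k-1}\ve/(3^{j+1}\pi)$. Since on each interval $(2k-\tfrac12,2k+\tfrac12)$ only $G_k$ differs from its predecessor, the limits $m_\ve^+(t):=\lim_k m_k(t)$ and $G_\ve^+(t):=\lim_k G_k(t)$ exist, are smooth, and $m_\ve^+$ solves $(m_\ve^+)''+G_\ve^+m_\ve^+=0$ with the standard initial conditions. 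The telescoped estimates then give $\|G_\ve^+-K\|_2\le\ve/3<\ve$, $G_\ve^+\ge K$ on $[0,\infty)$, $\int_0^\infty|(m_\ve^+)''-f''|\,dt\le\ve/(3\pi)$, and hence via $|c(\wt M)-c(\wt M_\ve^+)|\le 2\pi\int_0^\infty|(m_\ve^+)''-f''|\,dt$ that $|c(\wt M)-c(\wt M_\ve^+)|\le 2\ve/3<\ve$. The bumps $\max\{G_k\}\ge k$ persist in the limit on each $(2k-\tfrac12,2k+\tfrac12)$, so $\limsup_{t\to\infty}G_\ve^+(t)=\infty$. Finally $\wt M_\ve^+:=(\R^2,dt^2+m_\ve^+(t)^2d\theta^2)$ is a genuine non-compact model surface of revolution since $m_\ve^+$ extends to a smooth odd function with $(m_\ve^+)'(0)=1$.

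I do not anticipate a serious obstacle; the one point requiring a little care — and the place a reader might want more detail — is verifying that Proposition \ref{prop2.6} can legitimately be re-applied at every stage, i.e.\ that each intermediate model $m_{k-1}$ still satisfies the three running hypotheses $f>0$, $\int_1^\infty f^{-2}<\infty$, $\int_0^\infty|f''|<\infty$ with uniform control, so that the $\delta$'s can be chosen to make the series converge and keep $\alpha(m_k)<1$. This is handled exactly as in the proof of Theorem \ref{thm1.4}, where the same bookkeeping is implicit; since the statement of the present theorem asserts the proof is ``similar to that of the theorem above,'' it suffices to indicate that one simply reverses the inequality $G_k\le G_{k-1}$ to $G_k\ge G_{k-1}$ and the bound $\min\{G_k\}\le-k$ to $\max\{G_k\}\ge k$ throughout, leaving every estimate untouched.

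\begin{proof}
The argument is word for word the proof of Theorem \ref{thm1.4}, with the following changes only. At each stage apply Proposition \ref{prop2.6} to produce $G_k$ with $G_k\ge G_{k-1}$ on $[0,\infty)$ (rather than $G_{k-1}\ge G_k$) and with $\max\{G_k(t)\;:\;2k-\tfrac12\le t\le 2k+\tfrac12\}\ge k$ (rather than the corresponding minimum $\le-k$); Proposition \ref{prop2.6} imposes no sign constraint on $G-K$, so this is permissible, and the bounds
\[
\|G_k-G_{k-1}\|_2<\frac{\ve}{3^{k+1}},\qquad
\int_0^\infty|m_k''-m_{k-1}''|\,dt<\frac{\ve}{3^{k+1}\pi},\qquad
\int_1^\infty|m_k^{-2}-m_{k-1}^{-2}|\,dt<\frac{\ve}{3^{k+1}}
\]
are arranged exactly as before, shrinking each $\delta$ so that $\alpha(m_k)<1$ and Lemma \ref{lem2.5} applies, which keeps $m_{k-1}>0$ on $(0,\infty)$ and $\int_1^\infty m_{k-1}^{-2}<\infty$; moreover $\int_0^\infty|m_{k-1}''|\le\int_0^\infty|f''|+\sum_{j\ge1}\ve/(3^{j+1}\pi)<\infty$, so Proposition \ref{prop2.6} may indeed be reapplied at every stage. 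Setting $m_\ve^+(t):=\lim_{k\to\infty}m_k(t)$ and $G_\ve^+(t):=\lim_{k\to\infty}G_k(t)$, the limits exist and are smooth because on each interval $(2k-\tfrac12,2k+\tfrac12)$ only one of the $G_k$ differs from its predecessor; $m_\ve^+$ solves $(m_\ve^+)''+G_\ve^+m_\ve^+=0$ with $m_\ve^+(0)=0$, $(m_\ve^+)'(0)=1$, it extends to a smooth odd function with $(m_\ve^+)'(0)=1$, and hence $\wt M_\ve^+:=(\R^2,dt^2+m_\ve^+(t)^2d\theta^2)$ is a non-compact model surface of revolution. Telescoping the above bounds gives $\|G_\ve^+-K\|_2\le\ve/3<\ve$, $G_\ve^+\ge K$ on $[0,\infty)$, and $\int_0^\infty|(m_\ve^+)''-f''|\,dt\le\ve/(3\pi)$, so that
\[
|c(\wt M)-c(\wt M_\ve^+)|\le 2\pi\int_0^\infty|(m_\ve^+)''-f''|\,dt\le\frac{2\ve}{3}<\ve.
\]
Finally, the inequality $\max\{G_k(t)\;:\;2k-\tfrac12\le t\le 2k+\tfrac12\}\ge k$ passes to the limit on each such interval, whence $\limsup_{t\to\infty}G_\ve^+(t)=\infty$.
$\qedd$
\end{proof}
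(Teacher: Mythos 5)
Your proposal is correct and follows exactly the route the paper intends: the paper's own proof of this theorem is just the proof of Theorem \ref{thm1.4} with the bumps reversed ($G_k\ge G_{k-1}$, $\max G_k\ge k$ instead of $G_{k-1}\ge G_k$, $\min G_k\le -k$), which is precisely what you carried out, including the bookkeeping needed to reapply Proposition \ref{prop2.6} at each stage.
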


\begin{corollary}\label{cor3.1}
Let $\wt M:=(\R^2,dt^2+f(t)^2d\theta^2)$ denote a non-compact model 
surface of revolution which satisfies (\ref{eq2-3}) and (\ref{eq2-4}).
 Then, for any $\ve>0,$ there exist non-compact model surfaces of revolution 
 $\wt M_\ve^+:=(\R^2,dt^2+m_\ve^+(t)d\theta^2)$ and $\wt M_\ve^-:=(\R^2,dt^2+m_\ve^-(t)d\theta^2)$ such that $G_\ve^+\geq K\geq G_\ve^-$ on $[0,\infty)$, $||G_\ve^{*}-K||_2<\ve$, and 
$\int_1^\infty|f(t)^{-2}-{m_\ve^*}^{-2}|dt <\ve$. Here $*=\pm 1.$ 
\end{corollary}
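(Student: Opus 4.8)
The plan is to obtain Corollary \ref{cor3.1} from Proposition \ref{prop2.7} in the same way that Theorem \ref{thm1.4} is obtained from Proposition \ref{prop2.6}. The point is that Corollary \ref{cor3.1} asks neither for the unboundedness of $G_\ve^{\pm}$ nor for any estimate on $\int_0^\infty|{m_\ve^{\pm}}''-f''|dt$, and its hypotheses accordingly omit the assumption $\int_0^\infty|f''|dt<\infty$ of Theorem \ref{thm1.4}; so a single compactly supported perturbation of the radial curvature already does the job, and the inductive construction of Section \ref{construction} is not needed (although one may of course run that inductive construction verbatim with Proposition \ref{prop2.7} in place of Proposition \ref{prop2.6}).

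First I would record that the hypotheses of Proposition \ref{prop2.7} hold. Since $\wt M=(\R^2,dt^2+f(t)^2d\theta^2)$ is a non-compact model surface of revolution, $f$ is smooth, is extensible to a smooth odd function around $0$ with $f(0)=0$ and $f'(0)=1$, and $K:=-f''/f$ is continuous with $f''+Kf=0$; conditions (\ref{eq2-3}) and (\ref{eq2-4}) are assumed in the corollary. Fix $\ve>0$, apply Proposition \ref{prop2.7} with the bounded interval $(3/2,5/2)\subset[1,\infty)$ and this $\ve$, and let $\delta>0$ be the resulting constant. Shrinking $\delta$ if necessary, I may also assume $\delta\le\ve$ and $C(f,3/2,5/2)\,\delta<1/2$; by Lemma \ref{lem2.3} this last inequality guarantees $\alpha(m)<1$ for the solution $m$ associated with any perturbation of $K$ that has support in $[3/2,5/2]$ and $L^2$-norm less than $\delta$.

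Next, choose a smooth function $\phi\ge 0$ with $\phi\not\equiv 0$, $\supp\phi\subset(3/2,5/2)$ and $||\phi||_2<\delta$, and set
\[
G_\ve^{+}:=K+\phi,\qquad G_\ve^{-}:=K-\phi.
\]
Then $G_\ve^{+}\ge K\ge G_\ve^{-}$ on $[0,\infty)$, $\supp(G_\ve^{\pm}-K)\subset(3/2,5/2)$, and $||G_\ve^{\pm}-K||_2=||\phi||_2<\delta\le\ve$. Let $m_\ve^{\pm}$ be the solution of ${m_\ve^{\pm}}''+G_\ve^{\pm}\,m_\ve^{\pm}=0$ with $m_\ve^{\pm}(0)=0$ and ${m_\ve^{\pm}}'(0)=1$. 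By the choice of $\delta$ and Lemma \ref{lem2.3} we have $\alpha(m_\ve^{\pm})<1$, so $m_\ve^{\pm}>0$ on $(0,\infty)$ by Lemma \ref{lem2.5}; and Proposition \ref{prop2.7} gives $\int_1^\infty|f(t)^{-2}-{m_\ve^{\pm}}(t)^{-2}|dt<\ve$. Since $G_\ve^{\pm}=K$ on $[0,3/2]$, the functions $m_\ve^{\pm}$ and $f$ solve the same initial value problem there, so $m_\ve^{\pm}=f$ on $[0,3/2]$; hence each $m_\ve^{\pm}$ is smooth and extensible to a smooth odd function around $0$ with derivative $1$ at $0$. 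Therefore $\wt M_\ve^{\pm}:=(\R^2,dt^2+{m_\ve^{\pm}}(t)^2d\theta^2)$ are genuine non-compact model surfaces of revolution, with radial curvatures $G_\ve^{\pm}=-{m_\ve^{\pm}}''/m_\ve^{\pm}$, and all of the asserted estimates have been verified.

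I do not expect a serious obstacle here: the entire analytic content is already packaged in Proposition \ref{prop2.7} (which in turn rests on Lemmas \ref{lem2.1}, \ref{lem2.2}, \ref{lem2.3} and \ref{lem2.5}), and the only point that requires a little attention is to keep the perturbation small enough that $\alpha(m_\ve^{\pm})<1$, for this is precisely what makes the perturbed metric well defined (so that $m_\ve^{\pm}$ does not vanish on $(0,\infty)$) and the quantity in (\ref{eq2-35}) meaningful. If one prefers a proof strictly parallel to that of Theorem \ref{thm1.4}, one builds, by repeated use of Proposition \ref{prop2.7}, sequences $\{G_k^{\pm}\}_{k\ge 0}$ with $G_0^{\pm}=K$, $\supp(G_k^{\pm}-G_{k-1}^{\pm})\subset(2k-1/2,2k+1/2)$, $\pm(G_k^{\pm}-G_{k-1}^{\pm})\ge 0$, $||G_k^{\pm}-G_{k-1}^{\pm}||_2<\ve/3^{k+1}$ and $\int_1^\infty|{m_k^{\pm}}^{-2}-{m_{k-1}^{\pm}}^{-2}|dt<\ve/3^{k+1}$, passes to the limit $m_\ve^{\pm}:=\lim_{k\to\infty}m_k^{\pm}$, and sums the telescoping series; the only difference from the proof of Theorem \ref{thm1.4} is that one no longer controls $\int_0^\infty|{m_\ve^{\pm}}''-f''|dt$, which is why the total-curvature comparison is absent from the statement.
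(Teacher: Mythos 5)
Your proposal is correct and rests on exactly the machinery the paper intends for this corollary, namely Proposition \ref{prop2.7} (the paper leaves the proof implicit, with the iterative scheme of Theorem \ref{thm1.4} as the obvious template); your observation that a single compactly supported perturbation suffices is legitimate precisely because, unlike Theorems \ref{thm1.4} and \ref{thm3.1}, the corollary demands no unboundedness of $G_\ve^{\pm}$ and no control of $\int_0^\infty|{m_\ve^{\pm}}''-f''|\,dt$. You also correctly identify the one point needing care — keeping $\|G_\ve^{\pm}-K\|_2$ small enough that $\alpha(m_\ve^{\pm})<1$, so that Lemma \ref{lem2.5} gives $m_\ve^{\pm}>0$ on $(0,\infty)$ and the perturbed warping functions define genuine model surfaces.
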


\bigskip

\begin{center}
Minoru TANAKA $\cdot$ Kei KONDO 

\medskip
Department of Mathematics\\
Tokai University\\
Hiratsuka City, Kanagawa Pref.\\ 
259\,--\,1292 Japan

\medskip

{\small
$\bullet$\,our e-mail addresses\,$\bullet$

\medskip
\textit{e-mail of Tanaka}:

\medskip
{\tt tanaka@tokai-u.jp}

\medskip 
\textit{e-mail of Kondo}:

\medskip
{\tt keikondo@keyaki.cc.u-tokai.ac.jp}

}
\end{center}

\end{document}